\documentclass{article}
\usepackage{tikz}
\usepackage{amsmath,amssymb,amsthm}


\def\bfH{{\mathbf{H}}}
\def\bfe{{\mathbf{e}}}

\def\bbC{{\mathbb{C}}}
\def\bbN{{\mathbb{N}}}
\def\bbR{{\mathbb{R}}}
\def\bbZ{{\mathbb{Z}}}
\def\calC{{\mathcal{C}}}
\def\calE{{\mathcal{E}}}
\def\calM{{\mathcal{M}}}
\def\calS{{\mathcal{S}}}
\def\eps{{\varepsilon}}
\def\micron{{~\mu\mathrm{m}}}
\def\cminv{{~\mathrm{cm}^{-1}}}
\def\radminv{{~\mathrm{rad}.\mathrm{m}^{-1}}}
\def\sinc{\operatorname{sinc}}

\def\txi{{\tilde\xi}}
\newtheorem{theorem}{Theorem}
\newtheorem{proposition}{Proposition}


\begin{document}
\title{Impact of the Metallic Interface Description on Sub-wavelength Cavity Mode Computations}
\author{Brigitte Bidegaray-Fesquet\footnote{Laboratoire Jean Kuntzmann,
CNRS, Universit\'e Joseph Fourier, Grenoble INP, Universit\'e Pierre Mend\`es-France, BP 53, 38 041 Grenoble Cedex 9, France, \texttt{Brigitte.Bidegaray@imag.fr}. Corresponding author.}, 
\'Eric Dumas\footnote{Institut Fourier,
Universit\'e Joseph Fourier, CNRS, BP 74, 100 rue des Math\'ematiques, 38402 Saint Martin d'H\`eres, France, \texttt{edumas@ujf-grenoble.fr}}}
\maketitle

\begin{abstract}
We present a numerical study of electromagnetic reflection and cavity modes of 1D-sub-wavelength rectangular metallic gratings exposed to TM-polarized light. 
Computations are made using the modal development. 
In particular we study the influence of the choice of boundary conditions on the metallic surfaces on the determination of modes, on specular reflectance and cavity mode amplitudes. 
Our full real-metal approach shows some advantages when compared to former results since it is in better accordance with experimental results.
\end{abstract}

AMS: 30E15, 35B30, 35J05, 35Q60, 78A10 \\
Keywords: Helmholtz, sub-wavelength optics, boundary condition, impedance condition, asymptotics.

\maketitle

\section{Introduction}

The discovery of Surface Enhanced Raman Scattering has reinforced the interest in the study of the optical properties of rough metallic surfaces, in particular the understanding of near-field features.

We consider the reflection of an electromagnetic wave on the grating described on Figure \ref{fig.geom}. 
This grating is supposed to be infinite in the $z$-direction and periodic in the $x$-direction, with period $d$. 
The grating consists of grooves of width $w$ and depth $h$. 
The width $w$ (and in practice the period $d$, see \cite{Barbara-Quemerais-Bustarret-LopezRios-Fournier03} and physical parameters, Section \ref{sec.param}) is supposed to be smaller than the wavelength. 

\begin{figure}[htbp]
\begin{center}
\input{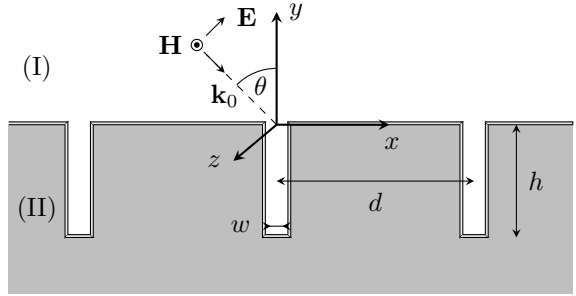}
\caption{\label{fig.geom}Reflecting grating in TM-polarization}
\end{center}
\end{figure}

The dimensions of this grating give rise to two effects when exposed to TM-polarized light.
First, since its period is of the order of the optical wavelength, experiments show extinctions of the specular reflectivity at specific incident angles. 
These extinctions stem from the transfer of energy to surface plasmons, which are very intense and propagate along the surface. 
Besides the studied gratings have also a depth of the order of the wavelength and this yields resonances inside the cavity.

Given a monochromatic incident wave with a wave vector of modulus $k_0$, we want to compute the reflected modes in the domain (I) and the cavity modes in the cavities (II), such that the resulting magnetic field $\bfH=H_z\bfe_z$ is solution to the Helmholtz equation
\begin{equation}
\label{Helmholtz}
\Delta_{x,y} H_z(x,y) + k_0^2 H_z(x,y) = 0.
\end{equation}

We apply here the modal development method \cite{Sheng-Stepleman-Sanda82}, which consists in writing the field in (I) and (II) as a sum of modes and using continuity properties to link both expansions. 
This method has the advantage to lead to very simple and efficient computations. 
Other more elaborate methods have also been successfully applied to diffraction gratings, such as integral methods, coupling of finite elements and Rayleigh expansion,\dots\ (see e.g. \cite{Bao-Cowsar-Masters01}).
This computation requires a choice for the treatment of the surface which is covered with gold.

Modal computations can be easily made treating all the gold surfaces with a perfectly conducting boundary condition (perfect metal).
In \cite{Barbara-Quemerais-Bustarret-LopezRios-Fournier03}, walls (vertical surfaces) are treated with such a condition and horizontal surfaces with a surface impedance condition (real metal).
Our aim here is to compare these computations with full real-metal ones and to analyze the impact of the surface condition choice on the computed mode coefficients.

In Section \ref{sec.setting} we describe the modal development and the different boundary conditions addressed.
We give the general outline of the modal method and the physical parameters used for the simulations which reproduce experiments. 
In Section \ref{sec.modes}, we explain how to find the cavity modes, perform an asymptotic expansion of the $n$th modal wave number in terms of $n$ and of the surface impedance, and study cavity mode energies.
We also show that these modes span the functional space of interest, which justifies \textit{a posteriori} the use of the modal method, even for real-metal surface conditions.
In Section \ref{sec.num}, we give hints on how to tune numerical parameters to obtain qualitatively good results compared to specular reflectance experimental measures.
On the way, we compare \cite{Barbara-Quemerais-Bustarret-LopezRios-Fournier03}-type computations and the full real-metal case and end up with cavity mode results.  

\section{Problem setting}
\label{sec.setting}

\subsection{Reflected and cavity modes}

Given an incident wave $H_z^i=e^{ik_0(\sin\theta x-\cos\theta y)}$ with a wave vector of modulus $k_0$ and an angle $\theta$ with respect to the vertical direction (see Figure \ref{fig.geom}), the reflected waves in domain (I) are only those which stem from constructive interferences and are given by a Rayleigh expansion
\begin{equation}
\label{HI}
H^I_z(x,y) = e^{ik_0(\gamma_0x-\beta_0y)}
+ \sum_{m\in\bbZ} R_m e^{ik_0(\gamma_mx+\beta_my)},
\end{equation}
where $\gamma_m=\sin\theta+2\pi m/k_0d$ and $\beta_m^2+\gamma_m^2=1$ ($\beta_m$ is chosen to have a non-negative imaginary part). 
In particular, $H_z^I$ is $d$-quasi-periodic: $H^I_z(x+d,y) = e^{ik_0\gamma_0d}H^I_z(x,y)$. 
Only a finite number of these waves, corresponding to a real $\beta_m$, are propagative.
One of our goals is to determine the reflection coefficients $R_m$, $m\in\bbZ$.
The second goal is to determine the cavity modes.
We seek them in the Sheng \cite{Sheng-Stepleman-Sanda82} mode decomposition, i.e. $H^{II}_z(x,y)$ is written as a superposition of waves of the type
\begin{equation}
\label{HII}
(Ae^{ik_0\mu x}+Be^{-ik_0\mu x}) (Ce^{ik_0\nu y}+De^{-ik_0\nu y}),
\end{equation}
where $\mu$ and $\nu$ are solution to $\mu^2+\nu^2=1$.

\subsection{Boundary conditions}

Impedance conditions are used to model the interface between a dielectric medium and a metal.
Indeed we do not want to model the electromagnetic wave which penetrates in the metal within the skin depth, but replace it by the condition
\begin{equation}
\label{impedance}
\partial_n H_z + ik_0 Z H_z = 0,
\end{equation}
where $n$ is the exterior normal of the metallic surface and $Z=\sqrt{\eps_1/\eps_m}$ (for the usual determination of the square root) is the relative surface impedance, computed from the relative dielectric constants $\eps_1$ and $\eps_m$ of the dielectric medium and the metal respectively. 
Here the dielectric medium is air and we denote $Z=\xi=1/\sqrt{\eps_m}$.
For a real metal the dielectric constant is complex and consists of a large negative real part and a smaller imaginary part. Numerical values of $\xi$ stemming from \cite{Palik85} are given on Table \ref{tab.xi}.
Considering a perfect metal interface consists in letting $\eps_m\to-\infty$, which yields $\xi=0$ and a Neumann boundary condition $\partial_n H_z = 0$.

We compare the following assumptions on the metallic surfaces (see also Figure \ref{fig.cases}):
\begin{description}
\itemsep-1mm
\item[(P) case:] perfect metal (Neumann condition) everywhere;
\item[(M$_0$) case:] perfect metal on walls and real metal on horizontal surfaces neglecting $\Im(\eps_m)$;
\item[(M) case:] same as (M$_0$) but taking into account $\Im(\eps_m)$ (mixed case of \cite{Barbara-Quemerais-Bustarret-LopezRios-Fournier03});
\item[(R$_0$) case:] real metal (impedance condition) everywhere neglecting $\Im(\eps_m)$;
\item[(R) case:] same as (R$_0$) but taking into account $\Im(\eps_m)$.
\end{description} 

\begin{figure}[htbp]
\begin{center}
\input{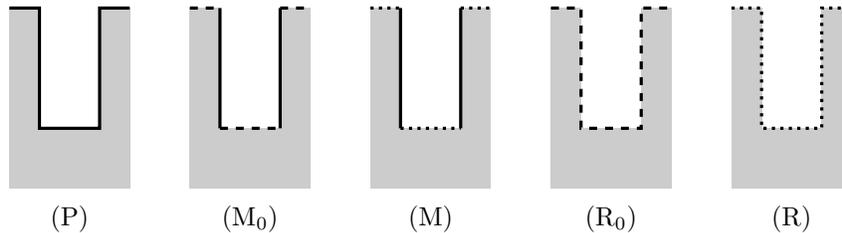}
\caption{\label{fig.cases}Different assumptions on the metallic surface: perfect metal (plain line), real metal neglecting $\Im(\eps_m)$ (dashed line), real metal (dotted line).}
\end{center}
\end{figure}

\subsection{Methodology}
\label{Sec.method}

We follow the computations in \cite{Barbara-Quemerais-Bustarret-LopezRios-Fournier03} which consist in three steps: 
\begin{description}
\itemsep-1mm
\item[step 1.] take into account the boundary condition \eqref{impedance} on the vertical walls.
This defines a relation between $A$ and $B$ in \eqref{HII}, and discrete values for $\mu$ which we denote by $\mu_n$, $n\in\bbN$. 
The corresponding values $\nu_n$ of $\nu$ follow immediately.
\item[step 2.] take into account the boundary condition \eqref{impedance} on the bottom of the cavity.
This defines a relation between $C$ and $D$ in \eqref{HII}.
\item[step 3.] write a linear combination of the cavity modes corresponding to the contributions for the different values of $n$ and take into account the conditions on the surface $y=0$ (for $x\in[-d/2,d/2]$) using also \eqref{HI}.
The impedance condition yields each reflection coefficient $R_m$, $m\in\bbZ$ in terms of the amplitudes $A_n$, $n\in\bbN$, of each cavity mode.
The continuity of the fields above the cavity (for $x\in[-w/2,w/2]$) yields cavity mode amplitudes in terms of the reflection coefficients. 
It leads to a linear system relating the cavity mode amplitudes, which we can solve, and from which we can deduce the values of the reflection coefficients.  
\end{description}

\subsection{Physical parameters}
\label{sec.param}

In the numerical experiments, we use exactly the same parameters as in \cite{Barbara-Quemerais-Bustarret-LopezRios-Fournier03}, namely a cavity width of $w=0.75\micron$, a periodicity of $d=1.75\micron$ and a cavity height of $h=1.11\micron$.
The incident wave angle is $\theta=7.5^\circ$ and the wavenumber varies between $400$ and $7400\cminv$. To compute $k_0$ in S.I. units, we have to multiply these values by $200\pi$, and therefore $k_0$ ranges from $0.25~10^6$ to $4.6~10^6\radminv$. 
The monomode mixed case corresponds to sub-half-wavelength cavities: $k_0<\pi/w = 4.2~10^6\radminv = 6666\cminv$.
We refer to \cite{Palik85} for values of $\eps_m$ for gold in this frequency range.

\section{Determining cavity modes}
\label{sec.modes}

The profile of the cavity modes is given by the first two steps given in Section \ref{Sec.method}.
We first determine the $x$-dependance of the cavity modes \eqref{HII}, i.e. the values of $\mu$ such that the functions $u(x)=Ae^{ik_0\mu x}+Be^{-ik_0\mu x}$ are solution to 
\begin{equation*}
\left\{
\begin{aligned}
u''(x) + k_0^2\mu^2 u(x) & = 0, \\
u'(-w/2)+ik_0\xi u(-w/2) & = 0, \\
u'(w/2)-ik_0\xi u(-w/2) & = 0.
\end{aligned}
\right.
\end{equation*}

In the (P), (M$_0$), and (M) cases, the wall boundary condition is a perfect metal condition, therefore $\xi=0$, and this problem is self-adjoint, leading to equally spaced horizontal modes: $\mu_n=n\pi/k_0w$ for $n\in\bbN$, and the simple relation $B=(-1)^nA$. \\

In the (R$_0$) and (R) cases, the above eigenvalue problem case can be seen as a perturbation of the self-adjoint case $\xi=0$. 
We once more have $B=\sigma_n A$, where $\sigma_n = \pm1$, but the values of $\mu_n$ are not explicit any more but only solution to a transcendental equation
\begin{equation}
\label{eq.modes}
\exp(i\mu_nk_0w) = \sigma_n \frac{\mu_n+\xi}{\mu_n-\xi}.
\end{equation}

For all $n\in\bbN$, we define the cavity mode
\begin{equation}
\label{eq.cavity}
\psi_n(x) = \frac12(-i)^n \left(e^{i\mu_nk_0x}+\sigma_ne^{-i\mu_nk_0x}\right).
\end{equation}

\subsection{Numerical determination of modes}

In the (R$_0$) case, $\eps_m$ is real and $\xi$ is therefore purely imaginary. 
The corresponding values of $\mu_n$ are real and the equation can be solved in the real domain. 
This can be easily done with a good precision by using e.g. dichotomy and a very fine step.
These first numerical results show that $\mu_n$ is very close to $n\pi/k_0w$ for $n\in\bbN^*$. 

If now $\Im(\eps_m)$ is taken into account (as in the (R) case), $\txi\equiv k_0w\xi\equiv\zeta+i\eta$ and $\mu_n$ is no more real: the numerical determination in the complex domain is a bit more delicate. It involves some determinant which has to be positive to find a relevant solution and it happens to be positive only around $n\pi/k_0w$, $n\in\bbN$.

We therefore look for a solution of \eqref{eq.modes} in the form $\mu_n=(n\pi+\alpha_n)/k_0w$, where $\alpha_n$ is small.
It is then clear that $\sigma_n=(-1)^n$ and $\alpha_n$ is solution to the "simpler" equation
\begin{equation}
\label{eq2.modes}
(n\pi+\alpha_n)\tan\left(\frac{\alpha_n}2\right) = -i\txi.
\end{equation}

\subsection{Asymptotic determination of modes}

For $n\in\bbN^*$ and $\txi$ in some bounded set of the complex plane (which is valid for our application), we write an expansion of $\alpha_n$ in terms of $\txi/n$ and find that
\begin{equation*}
\begin{aligned}
\alpha_n = & -\frac{2i}\pi \frac{\txi}n + \frac4{n\pi^3} \frac{\txi^2}{n^2} + \left(\frac{16i}{n^2\pi^5} - \frac{2i}{3\pi^3}\right) \frac{\txi^3}{n^3} - \left(\frac{80}{n^3\pi^7} - \frac{16}{3n\pi^5}\right) \frac{\txi^4}{n^4} \\
& - \left(\frac{448i}{n^4\pi^9} - \frac{40i}{n^2\pi^7} + \frac{2i}{5\pi^5}\right) \frac{\txi^5}{n^5} + o\left(\frac{\txi^5}{n^5}\right).
\end{aligned}
\end{equation*}
We have to go relatively far in the expansion since $\txi$ is not very small as shows Table \ref{tab.xi}. 

\begin{table}[h]
\begin{center}
\begin{tabular}{r|ccccccc}
\hline
$k_0$ & 1000 & 2000 & 3000 & 4000 & 5000 & 6000 & 7000 \\
\hline
$\zeta$ & 0.0018 & 0.0038 & 0.0064 & 0.0094 & 0.0126 & 0.0159 & 0.0194 \\
\hline
$\eta$ & -0.0083 & -0.0297 & -0.0668 & -0.1191 & -0.1864 & -0.2663 & -0.3613 \\
\hline
\end{tabular}
\end{center}
\caption{\label{tab.xi}Value of $\Re(\txi)$ and $\Im(\txi)$ with respect to the wave number $k_0$ (in $\cminv$), derived from values of $\eps_m$ \cite{Palik85}.}
\end{table}

The dependance of modes $n=1$ and $n=2$ of the (R) case with respect to the wavenumber is shown on Figure \ref{fig.mu_n}. 

\begin{figure}[htbp]
\begin{center}
\begin{minipage}{.45\linewidth}
\includegraphics[width=\textwidth]{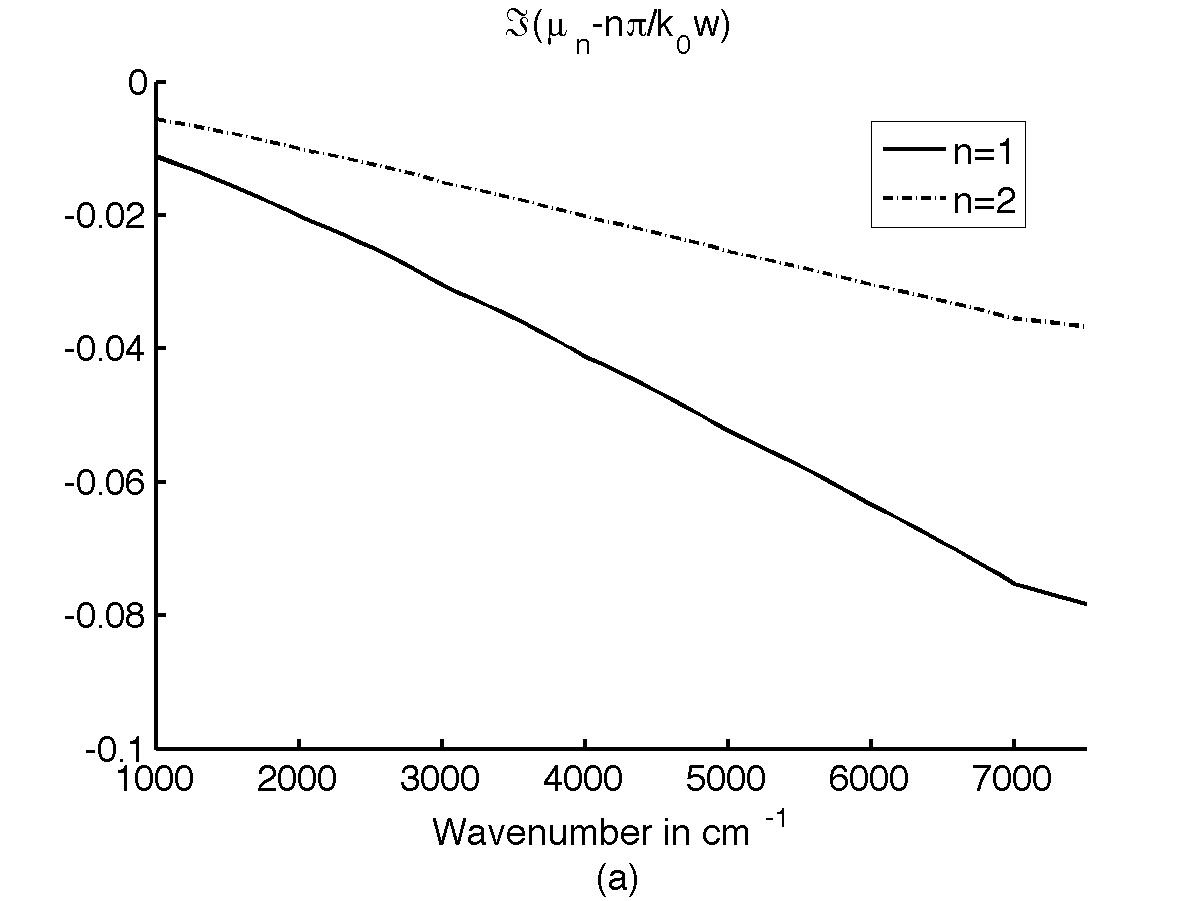}
\end{minipage}
\hfill
\begin{minipage}{.45\linewidth}
\includegraphics[width=\textwidth]{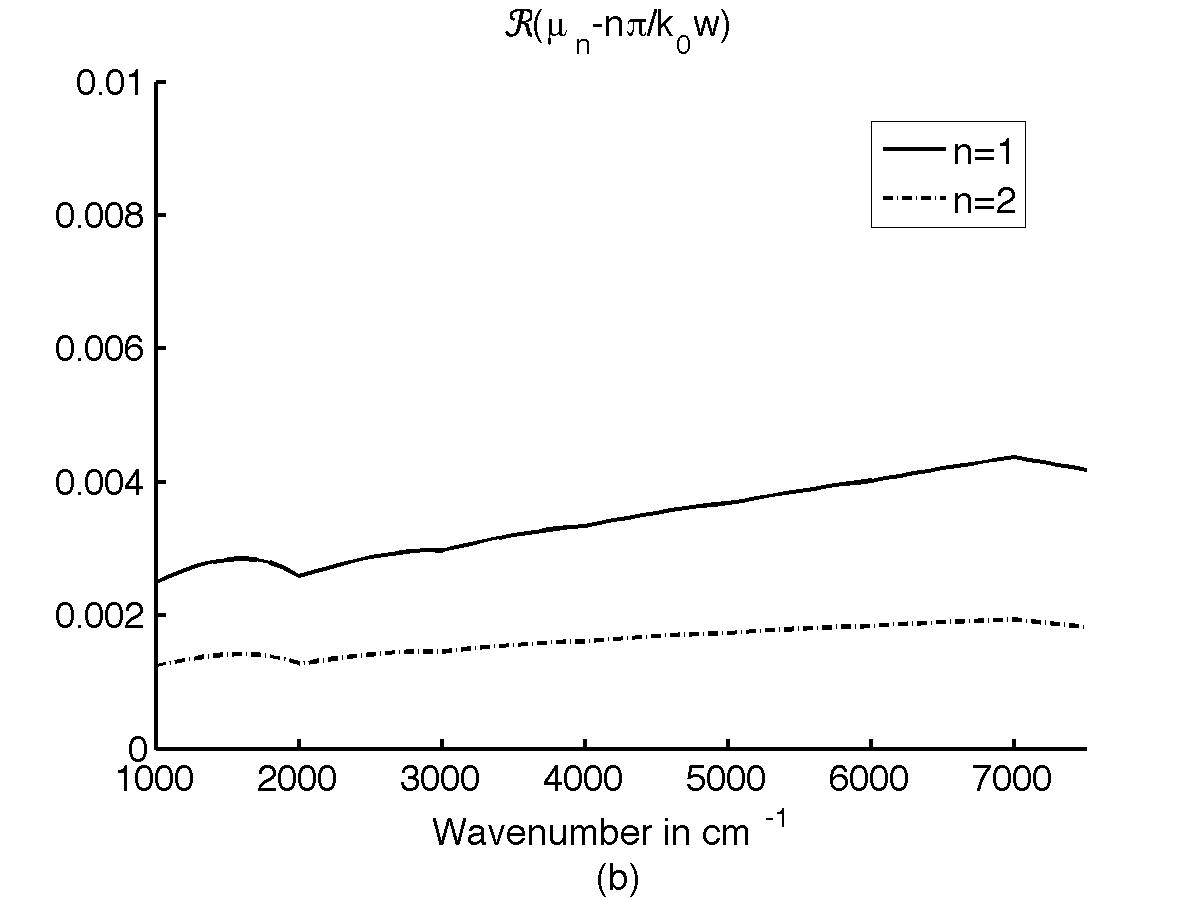}
\end{minipage}
\caption{\label{fig.mu_n} Values of $\Re(\mu_n-n\pi/k_0w)$ (a) and $\Im(\mu_n-n\pi/k_0w)$ (b) with respect to the wave number for modes $n=1$ and 2.}
\end{center}
\end{figure}

\subsection{Looking for a "fundamental" mode}

In the previous expansions we have not treated the case $n=0$.
We are guided by the simple computation
\begin{equation*}
\alpha_0\tan\left(\frac{\alpha_0}2\right) = -i\txi
\hspace{1cm}\Rightarrow\hspace{1cm} i\alpha_0^2 \simeq 2\txi.
\end{equation*}
This is clearly impossible in the (R$_0$) case, where this is equivalent to $\alpha_0^2 \simeq 2\eta$, where $\eta<0$ (see Table \ref{tab.xi}) and $\alpha_0$ should be real. 
On the contrary, it is possible to solve this in the complex domain in the (R) case, it is natural to seek an expansion of the first mode $\alpha_0$ in terms of $\chi=\sqrt{\txi}$, yielding only odd terms:
\begin{equation*}
\alpha_0 = (1-i) \left(\chi +  \frac{i}{12} \chi^3 - \frac{11}{1440} \chi^5 - \frac{17i}{40320} \chi^7 - \frac{281}{9676800} \chi^9\right)+ o(\chi^{10}).
\end{equation*}

The dependance of this first mode with respect to the wavenumber in the (R) case is shown on Figure \ref{fig.mu_0}. 

\begin{figure}[htbp]
\begin{center}
\begin{minipage}{.45\linewidth}
\includegraphics[width=\textwidth]{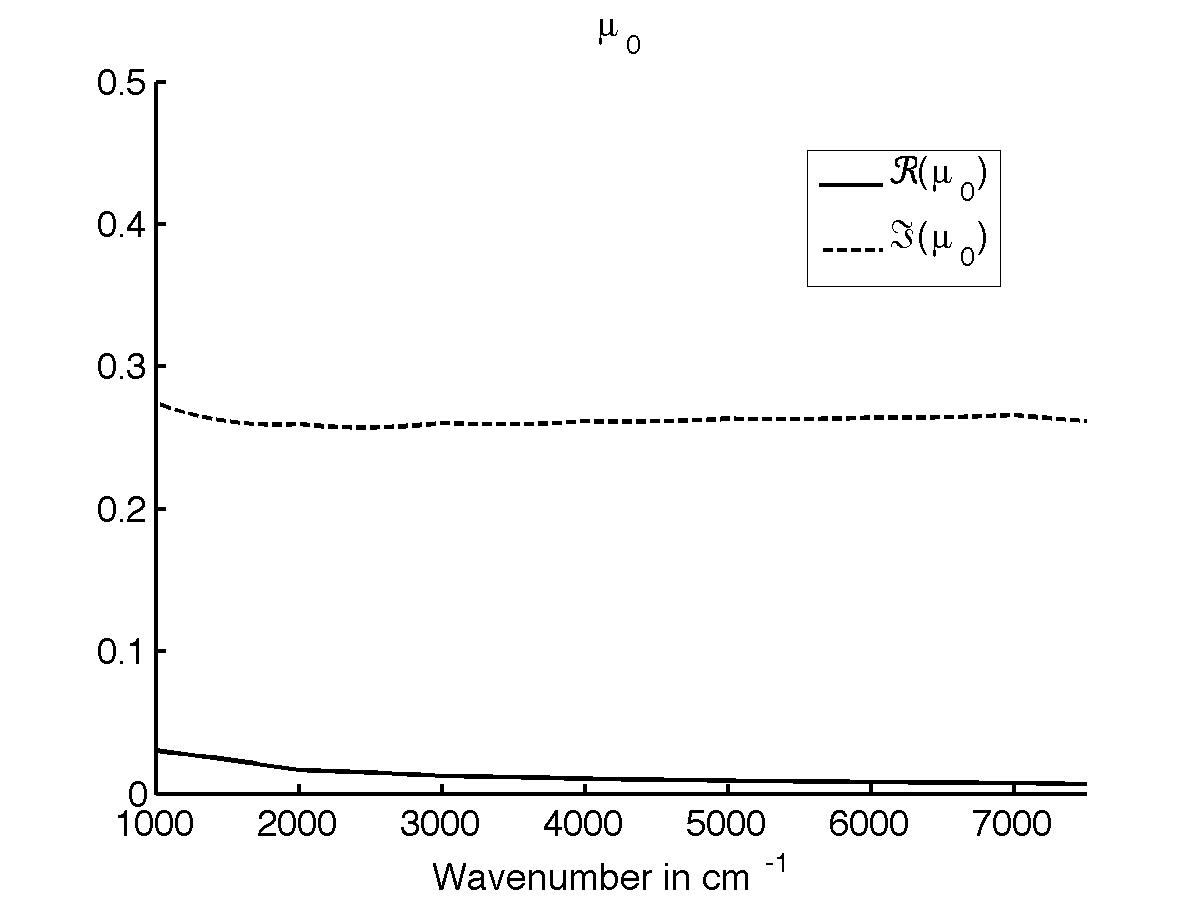}
\end{minipage}
\caption{\label{fig.mu_0} Values of $\Re(\mu_0)$ and $\Im(\mu_0)$ with respect to the wave number.}
\end{center}
\end{figure}

The fundamental mode plotted on Figure \ref{fig.mu_0} shows an almost constant imaginary part and a larger real part for low wavenumbers. 
This behavior is quite different from the higher order modes (the first two being displayed on Figure \ref{fig.mu_n}) where $|\alpha_n|$ increases with $|\txi|$. 
If we had plotted $\mu_n-n\pi/k_0w$ in the (R$_0$) case, the picture would have been very close to the real parts displayed on Figure \ref{fig.mu_n}(a).
This behavior of the fundamental mode clearly does not "pass to the limit" as the permittivity becomes real, and this is another evidence of the absence of a small mode in the (R$_0$) case.
The (R$_0$) case, which could be considered as more precise as the (M$_0$) is in fact not at all suitable for modal computations (see also the first numerical test cases in Section \ref{sec.monomode}).

\subsection{Completeness of the cavity modes}

We now want to show that we can use the above defined modes as a basis of functions in the cavity.
Completeness of the eigenfunctions follows from the same argument as in the similar context of \cite{Hazard-Lenoir93}. 
We indeed follow the result of \cite{Kato95}:
\begin{theorem}[\cite{Kato95}, theorem V.2.20]
\label{Th.Kato}
Let $\{\phi_j\}$ be a complete orthonormal family in a separable Hilbert space $H$ and let $\{\psi_j\}$ be a sequence such that $r^2=\sum_{j=1}^\infty\|\psi_j-\phi_j\|^2<\infty$. Then $\{\psi_j\}$ is a basis of $H$ if $\sum_{j=1}^\infty \eta_j\psi_j=0$ implies that $\eta_j=0$ for all $j$.
\end{theorem}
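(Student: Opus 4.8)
\medskip

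\noindent\emph{Proof idea.} The plan is to realize $\{\psi_j\}$ as the image of the orthonormal basis $\{\phi_j\}$ under a bounded operator of the form $I+T$ with $T$ compact, and then to deduce invertibility of $I+T$ from the stated uniqueness (minimality) hypothesis via the Fredholm alternative; this is essentially the Paley--Wiener type stability argument, with the minimality assumption playing the role usually taken by the smallness condition $r<1$. First I would define $T$ on $H$ by $T\phi_j:=\psi_j-\phi_j$ and extend by linearity and continuity: since $\{\phi_j\}$ is a complete orthonormal family, every $x\in H$ writes uniquely as $x=\sum_{j}\eta_j\phi_j$ with $(\eta_j)\in\ell^2$, and one sets $Tx:=\sum_{j}\eta_j(\psi_j-\phi_j)$. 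The Cauchy--Schwarz estimate $\sum_{j}|\eta_j|\,\|\psi_j-\phi_j\|\le r\,\|x\|$ shows this series converges in $H$ and that $\|Tx\|\le r\|x\|$, so $T$ is bounded; moreover $\sum_{j}\|T\phi_j\|^2=r^2<\infty$, so $T$ is Hilbert--Schmidt, hence compact. By construction $(I+T)\phi_j=\psi_j$ for every $j\in\bbN$.

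Next I would prove that $I+T$ is injective. If $(I+T)x=0$ with $x=\sum_{j}\eta_j\phi_j$, then by boundedness of $I+T$ one may move the operator inside the sum, giving $0=(I+T)x=\sum_{j}\eta_j(I+T)\phi_j=\sum_{j}\eta_j\psi_j$; the hypothesis then forces $\eta_j=0$ for all $j$, i.e. $x=0$. Since $T$ is compact, $I+T$ is Fredholm of index zero (Riesz--Schauder theory), so injectivity yields surjectivity, and the bounded inverse theorem makes $(I+T)^{-1}$ bounded. Thus $\{\psi_j\}=\{(I+T)\phi_j\}$ is the image of an orthonormal basis under a bounded, boundedly invertible operator; consequently every $x\in H$ has the unique (unconditionally convergent) expansion $x=\sum_{j}\big\langle (I+T)^{-1}x,\phi_j\big\rangle\,\psi_j$, which is exactly the statement that $\{\psi_j\}$ is a (Riesz) basis of $H$.

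The only genuinely delicate step is the identity $(I+T)\big(\sum_{j}\eta_j\phi_j\big)=\sum_{j}\eta_j\psi_j$ used in the injectivity argument: one must know a priori that both series converge in $H$ so that the bounded operator $I+T$ can be interchanged with the summation, and this is precisely where $(\eta_j)\in\ell^2$ together with $r<\infty$ enters. Everything else — Hilbert--Schmidt implies compact, the Fredholm alternative for $I+(\text{compact})$, the bounded inverse theorem — is standard, and no quantitative control on the size of $r$ is needed.
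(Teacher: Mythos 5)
Your proof is correct. Note, however, that the paper does not prove this statement at all: it is quoted verbatim from Kato (Theorem V.2.20) and used as a black box to establish Proposition \ref{prop.basis}. Your argument --- defining $T\phi_j=\psi_j-\phi_j$, observing that $\sum_j\|T\phi_j\|^2=r^2<\infty$ makes $T$ Hilbert--Schmidt hence compact, and converting the $\omega$-independence hypothesis into injectivity of $I+T$ so that the Fredholm alternative gives bounded invertibility --- is essentially Kato's own proof (Kato factors out a finite-rank piece plus a small-norm piece rather than invoking Riesz--Schauder by name, but that is the same mechanism), and you correctly identify the one delicate point, namely justifying $(I+T)\sum_j\eta_j\phi_j=\sum_j\eta_j\psi_j$ by continuity of $I+T$ on the partial sums.
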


\begin{proposition}
\label{prop.basis}
The cavity modes $\{\psi_n,\ n\in\bbN\}$ form a basis of $L^2(-w/2,w/2)$.
\end{proposition}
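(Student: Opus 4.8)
The plan is to invoke Kato's theorem (Theorem~\ref{Th.Kato}) on $H=L^2(-w/2,w/2)$, with $\{\phi_j\}$ taken to be the \emph{normalized perfect-metal modes}: writing $\phi_n^{\rm P}$ for \eqref{eq.cavity} with $\mu_n=n\pi/k_0w$ and $\sigma_n=(-1)^n$ — a Neumann eigenfunction of $-d^2/dx^2$ on $(-w/2,w/2)$ — we set $\phi_n=c_n\phi_n^{\rm P}$ with $c_0=1/\sqrt w$ and $c_n=\sqrt{2/w}$ for $n\ge1$, so that $\{\phi_n\}$ is a complete orthonormal system by classical Fourier series. Putting $\tilde\psi_n=c_n\psi_n$, the family $\{\psi_n\}$ is a basis of $L^2(-w/2,w/2)$ if and only if $\{\tilde\psi_n\}$ is, and it remains to check the two hypotheses of Theorem~\ref{Th.Kato}: (i) $r^2=\sum_n\|\tilde\psi_n-\phi_n\|^2<\infty$, and (ii) $\sum_n\eta_n\tilde\psi_n=0$ forces $\eta_n=0$ for all $n$. (In the (P), (M$_0$), and (M) cases the $x$-profiles $\psi_n$ coincide with the $\phi_n^{\rm P}$ and there is nothing to do; the content is the (R) case, where the whole family, \emph{including} the fundamental mode $\psi_0$, is available — in the (R$_0$) case that mode is missing, the family no longer spans the mean-value direction, and the statement has to be read accordingly.)

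Hypothesis (i) is a direct consequence of the mode asymptotics of Section~\ref{sec.modes}: for $n$ large one has $\sigma_n=(-1)^n$ and $\mu_nk_0=(n\pi+\alpha_n)/w$ with $\alpha_n=-\tfrac{2i}\pi\tfrac{\txi}n+O(1/n^2)$, hence for $x\in[-w/2,w/2]$
\begin{equation*}
\psi_n(x)-\phi_n^{\rm P}(x)=\tfrac12(-i)^n\Bigl(e^{in\pi x/w}\bigl(e^{i\alpha_nx/w}-1\bigr)+(-1)^ne^{-in\pi x/w}\bigl(e^{-i\alpha_nx/w}-1\bigr)\Bigr),
\end{equation*}
so that $\|\psi_n-\phi_n^{\rm P}\|_{L^\infty}\le|\alpha_n|e^{|\alpha_n|/2}\le C/n$ and $\|\tilde\psi_n-\phi_n\|_{L^2}^2\le c_n^2\,w\,C^2/n^2$. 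The series over $n\ge1$ converges, and the single term $n=0$ (the fundamental mode, expanded in $\chi=\sqrt{\txi}$ in Section~\ref{sec.modes}) is finite; hence $r^2<\infty$.

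Hypothesis (ii) is the heart of the matter, and I would establish it via biorthogonality with the adjoint problem. The $\psi_n$ are the eigenfunctions, for the pairwise distinct eigenvalues $k_0^2\mu_n^2$, of the (non-self-adjoint) operator $T=-d^2/dx^2$ on $L^2(-w/2,w/2)$ with the impedance boundary conditions; $T$ has compact resolvent, and conjugating the boundary conditions shows that $T^\ast$ is the same operator with $\xi$ replaced by $-\overline\xi$, of which $\overline{\psi_n}$ is an eigenfunction. Distinctness of the eigenvalues then gives $\int_{-w/2}^{w/2}\psi_n\psi_m=0$ for $n\neq m$, while a short integration by parts together with \eqref{eq.modes} yields
\begin{equation*}
\int_{-w/2}^{w/2}\psi_n(x)^2\,dx=\frac{(-1)^n}2\Bigl(\sigma_n w+\frac{\sin(\mu_nk_0w)}{\mu_nk_0}\Bigr),
\end{equation*}
which is $w/2+o(1)$ for large $n$, equals $\tfrac w2\bigl(1+\sinc\alpha_0\bigr)\approx w$ at $n=0$, and is nonzero throughout the parameter range considered (its vanishing would force $\sin(\mu_nk_0w)=-\sigma_n\mu_nk_0w$, which does not occur for the values of $\txi$ at hand, cf. Table~\ref{tab.xi}). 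Consequently, if $\sum_n\eta_n\tilde\psi_n=0$, pairing this $L^2$-convergent series with the bounded functional $\langle\,\cdot\,,\overline{\psi_m}\rangle$ leaves only $\eta_m c_m\int\psi_m^2=0$, i.e. $\eta_m=0$. With (i) and (ii) verified, Theorem~\ref{Th.Kato} gives that $\{\tilde\psi_n\}$, hence $\{\psi_n\}$, is a basis of $L^2(-w/2,w/2)$. The one genuine obstacle is (ii) — equivalently, ruling out that some $\psi_m$ lies in the closed span of the others, i.e. proving $\int\psi_n^2\neq0$ for every $n$ (and, implicitly, that the $\mu_n^2$ are simple and pairwise distinct) — whereas (i) is a routine by-product of the asymptotic expansions already at hand.
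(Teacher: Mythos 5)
Your proof follows essentially the same route as the paper: both compare the $\psi_n$ to the normalized cosine (perfect-metal) family, verify square-summability of the differences using the asymptotics of $\alpha_n$, establish $\int\psi_n\psi_{n'}=0$ for $n\neq n'$ from the transcendental equation, and conclude via Kato's Theorem~\ref{Th.Kato}. The only substantive differences are that you use a crude $L^\infty$ bound where the paper computes the optimally renormalized distance $d_{n,\min}^2=2\zeta^2/(3\pi^2n^2)+o(1/n^2)$ exactly, you obtain the biorthogonality from the adjoint problem rather than by direct integration, and — to your credit — you explicitly check that $\int\psi_n^2=wT_n\neq0$, a step the paper's final ``therefore'' needs but leaves unverified.
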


\begin{proof}
We denote $\check\psi_n(x)=\cos(n\pi(x+w/2)/w)$, $n\in\bbN$, the functions $\psi_n$ defined by Equation \eqref{eq.cavity} in the cases (P), (M$_0$) or (M) where $\mu_n=n\pi/k_0w$.
To prove Proposition \ref{prop.basis} we renormalize our functions $\psi_n$ and $\check\psi_n$, to $\rho_n\psi_n$ and $\check\rho_n\check\psi_n$. 
Taking $\check\rho_0=1$ and $\check\rho_n=\sqrt2$ for $n>0$, it is a classical result that $\check\rho_n\check\psi_n$ is an orthonormal basis of $L^2(-w/2,w/2)$.
For each $n>0$, we want to find $\rho_n$ such that $d_n=\|\rho_n\psi_n-\check\rho_n\check\psi_n\|$ is minimal.
We first compute 
\begin{equation*}
d_n^2 = |\rho_n|^2 I_n - 2\check\rho_n \Re(\rho_nJ_n) + 1,
\end{equation*}
where
\begin{equation*}
I_n = \frac1w \int_{-w/2}^{w/2} \psi_n(x)\bar\psi_n(x) dx
\textrm{ and }
J_n = \frac1w \int_{-w/2}^{w/2} \psi_n(x)\check\psi_n(x) dx.
\end{equation*}
The minimum value of $d_n^2$ , clearly obtained when $\rho_n=\check\rho_n\bar J_n/I_n$, is 
\begin{equation*}
d_{n,\rm min}^2 = 1 - 2 \frac{|J_n|^2}{I_n}.
\end{equation*}
An explicit computation leads to
\begin{eqnarray*}
I_n & = & \frac12\left(\frac{e^{\Im(\alpha_n)}-e^{-\Im(\alpha_n)}}{2\Im(\alpha_n)}+\frac{e^{i\Re(\alpha_n)}-e^{-i\Re(\alpha_n)}}{2i(n\pi+\Re(\alpha_n))}\right), \\
J_n & = & \frac{(n\pi+\alpha_n)\sin(\alpha_n/2)}{\alpha_n(n\pi+\alpha_n/2)}.
\end{eqnarray*}
Using the previously obtained expansion of $\alpha_n$
\begin{equation*}
I_n = \frac12 + \frac{\zeta^2}{3\pi^2n^2} + \frac{\eta}{\pi^2n^2} + o\left(\frac1{n^2}\right)
\textrm{ and }
J_n = \frac12 + \frac{\eta}{2\pi^2n^2} - i\frac{\zeta}{2\pi^2n^2} + o\left(\frac1{n^2}\right),
\end{equation*}
and finally
\begin{equation*}
d_{n,\rm min}^2 = \frac{2\zeta^2}{3\pi^2n^2} + o\left(\frac1{n^2}\right).
\end{equation*}
We therefore have $\sum_n d_{n,\rm min}^2 < \infty$. \\

Now if we compute explicitely
\begin{equation*}
T_{nn'} = \frac1w \int_{-w/2}^{w/2} \psi_n(x)\psi_{n'}(x) dx,
\end{equation*}
the fact that $\mu_n$ and $\mu_{n'}$ both are solution to \eqref{eq2.modes} leads to $T_{nn'}=0$ if $n\neq n'$.
We simply denote $T_n\equiv T_{nn}$ in the sequel.
Therefore $\sum_{n=0}^\infty \eta_n\psi_n=0$ leads to $\eta_n=0$ for all $n$ and, following Theorem \ref{Th.Kato}, $\{\rho_n\psi_n\}$ (and of course also $\{\psi_n\}$) is a basis of $L^2(-w/2,w/2)$.
\end{proof}

\subsection{Cavity mode profile}
\label{sec.profile}

The computations of the second step are the same in all cases, and lead for each $n$ to the $n$th cavity mode profile:
\begin{equation*}
\calC_n (x,y) = \frac12 (-i)^n(e^{i\mu_nk_0x}+\sigma_n e^{-i\mu_nk_0x})
e^{-i\nu_nk_0h} (e^{i\nu_nk_0(y+h)}+ r_n e^{-i\nu_nk_0(y+h)}), 
\end{equation*}
where $r_n = (\nu_n+\xi)/(\nu_n-\xi)$.
The (P), (M$_0$), and (M) cases can also be cast like this with the specific values $\mu_n=n\pi/k_0w$.
The field in the cavity consists of a superposition of these modes:
\begin{equation*}
H^{II}_z(x,y) = \sum_{n\in\bbN} A_n \calC_n (x,y).
\end{equation*}
The determination of the square root $\nu_n=\sqrt{1-\mu_n^2}$ is of great importance.
Indeed, if we choose the usual determination, where $\Im(\nu_n)\geq0$, then $\calC(x,y)$ is huge if evanescent modes are taken into account.
This is shown on Figure \ref{fig.energy_pos}, where the logarithm of the energies $\calE_n$ of the first cavity modes is given: 
\begin{equation*}
\calE_n = \int_{x=-w/2}^{w/2} \int_{y=-h}^0 |\calC_n(x,y)|^2 dx dy. 
\end{equation*}

\begin{figure}[htbp]
\begin{center}
\begin{minipage}{.45\linewidth}
\includegraphics[width=\textwidth]{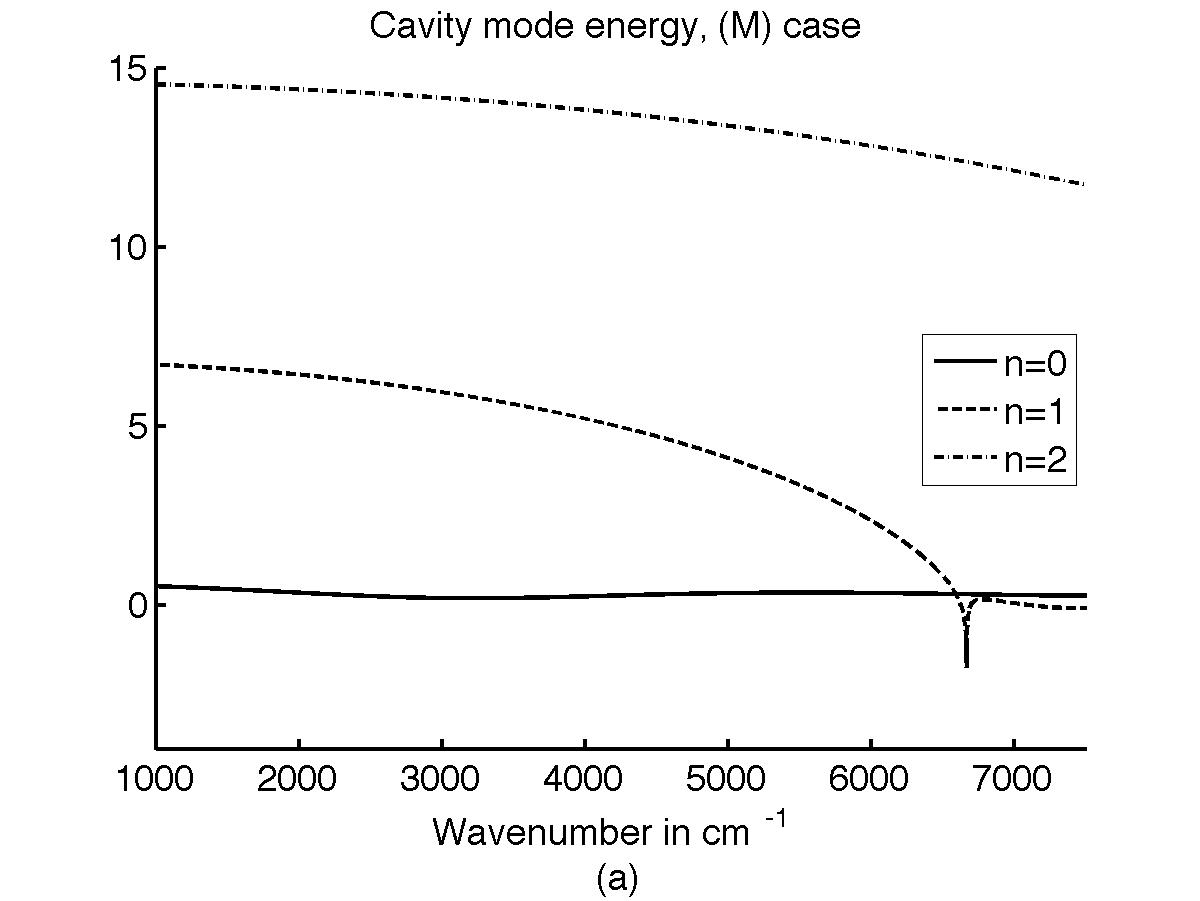}
\end{minipage}
\hfill
\begin{minipage}{.45\linewidth}
\includegraphics[width=\textwidth]{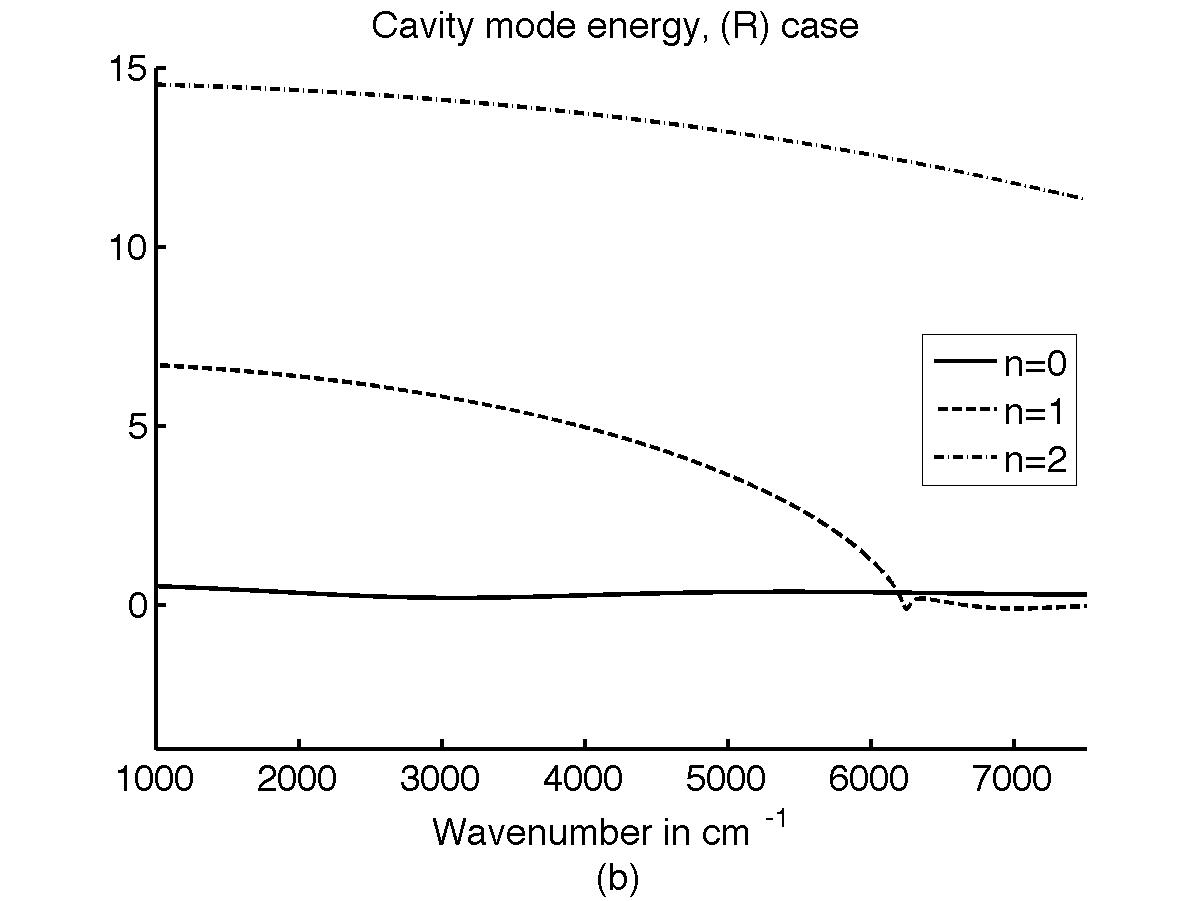}
\end{minipage}
\caption{\label{fig.energy_pos} $\log_{10}(\calE_n)$ for $\Im(\nu_n)\geq0$ and for $n=0$, 1, 2 in the (M) case (a), and the (R) case (b).}
\end{center}
\end{figure}

The role of the presence of the evanescent modes is clear: in the (M) case, above $k_0=6666\cminv$, the theoretical limit of the monomode case (as computed in Section~\ref{sec.param}), 
where only the first mode is non-evanescent, $\calC_1$ has a moderate size  (Figure \ref{fig.energy_pos}(a)).
In the (R) case, the notion of an evanescent mode is slightly different since $\mu_n$ has always a real and an imaginary part.
We see on Figure \ref{fig.energy_pos}(b) that the upper limit of the (R)-monomode case is $k_0\simeq6250\cminv$.

Having such different magnitudes (contrast of about 15 decades) in the description of the total wave $H_z^{II}$ will lead to a very poor conditioning of the linear system we shall eventually invert (see Section \ref{sec.cond}). 

If we make the opposite choice and have $\Im(\nu_n)\leq0$, the magnitude of $\calE_n$ is always relatively moderate, as shown on Figure \ref{fig.energy_neg}.

\begin{figure}[htbp]
\begin{center}
\begin{minipage}{.45\linewidth}
\includegraphics[width=\textwidth]{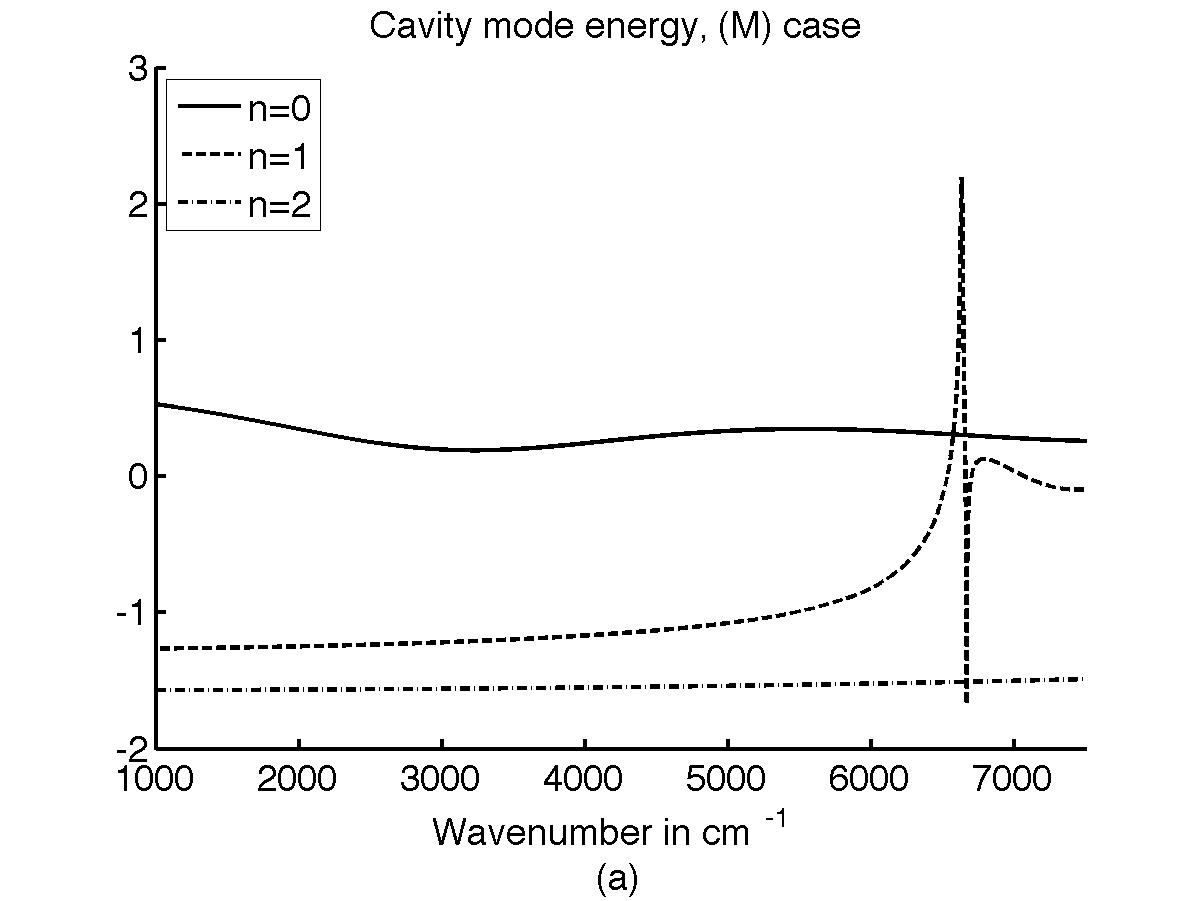}
\end{minipage}
\hfill
\begin{minipage}{.45\linewidth}
\includegraphics[width=\textwidth]{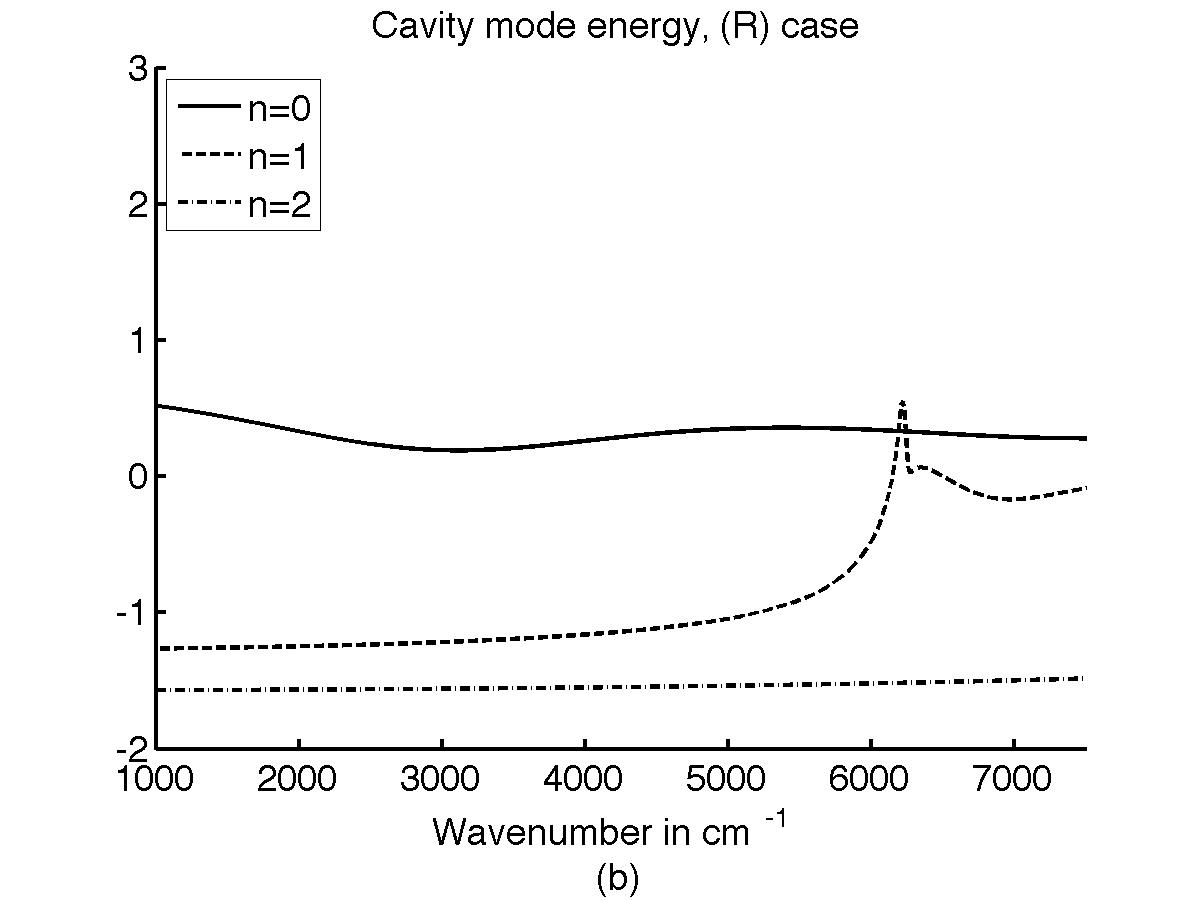}
\end{minipage}
\caption{\label{fig.energy_neg} $\log_{10}(\calE_n)$ for $\Im(\nu_n)\leq0$ and for $n=0$, 1, 2 in the (M) case (a), and the (R) case (b).}
\end{center}
\end{figure}

Now the difference of magnitude of the different modes at a given frequency is of the order of 100, which is quite acceptable.
Choosing the sign of $\Im(\nu_n)$ is only choosing a way to normalize the cavity modes.

\subsection{Towards an infinite dimensional system}

The impedance condition (the quantity $(\partial_y+ik_0\xi)H^{I}_z(x,0)$ is vanishing on $(-d/2,-w/2)\cup(w/2,d/2)$) and the continuity of the fields are projected in the $L^2_x$ sense in two ways: 
\begin{itemize}
\item firstly in $L^2(-d/2,d/2)$ on functions $\phi_m(x)=e^{ik_0\gamma_mx}$, filtering this oscillation in $H^I_z(x,0)$ to recover the reflection coefficient $R_m$; 
\item secondly in $L^2(-w/2,w/2)$ on functions $\psi_n(x)=(-i)^n(e^{i\mu_nk_0x} + \sigma_n e^{-i\mu_nk_0x})/2$, yielding the cavity mode coefficients $A_n$.
\end{itemize}
We therefore compute
\begin{equation*}
\begin{aligned}
\frac1d \int_{-d/2}^{d/2} 
& \left(\partial_y H^{I}_z(x,0^+) +ik_0 \xi H^{I}_z(x,0^+)\right) \bar\phi_m(x)\ dx \\
&= \frac1d \int_{-w/2}^{w/2} 
\left(\partial_y H^{II}_z(x,0^-)+ik_0 \xi H^{II}_z(x,0^-)\right)
\bar\phi_m(x)\ dx 
\end{aligned}
\end{equation*}
which yields the reflection mode amplitudes in terms of the cavity mode amplitudes:
\begin{equation}
\label{Rm}
R_m
= \frac{\beta_0-\xi}{\beta_0+\xi} \delta_{m0} 
+ \frac{\Gamma}{\beta_m+\xi} 
\sum_{n\in\bbN} A_n \sigma_n S_{mn} (\nu_n+\xi) [1 - e^{-2i\nu_nk_0h}],
\end{equation}
where $\Gamma=w/d$ is the aspect ratio, $\delta_{m0}$ is the Kronecker delta, and 
\begin{equation*}
S_{mn} = \frac1w \int_{-w/2}^{w/2} \psi_n(x)\bar\phi_m(x)\ dx.
\end{equation*}
We also project the continuity condition above the cavity $H^I(x,0)=H^{II}(x,0)$ for $-w/2\leq x\leq w/2$ onto the functions $\psi_n(x)$
\begin{equation*}
\frac1w \int_{-w/2}^{w/2} H^{I}_z(x,0^+) \psi_n(x)\ dx
= \frac1w \int_{-w/2}^{w/2} H^{II}_z(x,0^-) \psi_n(x)\ dx
\end{equation*}
which yields the cavity mode amplitudes in terms of the reflection mode amplitudes:
\begin{equation}
\label{An}
A_n = \frac1{T_n(1+r_n e^{-2i\nu_nk_0h})} \sum_{m\in\bbZ} S_{mn}(\delta_{m0} + R_{m}),
\end{equation}
where
\begin{equation*}
T_n = \frac1w \int_{-w/2}^{w/2} \psi_n^2(x) dx = \frac12\left(\sigma_n\sinc(\mu_nk_0w)+1\right),
\end{equation*}
with as usual $\sinc(z) = \sin(z)/z$, for all $z\in\bbC$.
We can plug \eqref{Rm} in \eqref{An} and find an infinite dimensional linear system $\calM A = \calS$.

\section{Numerical experiments and mode amplitudes}
\label{sec.num}

To solve the linear system numerically an assumption has to be made on the number of relevant modes, which leads to a finite dimensional system.
Restricting the number of modes destroys the continuity at the interface between the two regions (I) and (II).
Let $N$ be the number of cavity modes and $M$ the number of reflected modes kept for the computations.
One possible way to choose these values is to keep only propagative waves, i.e. reflected waves for which $\beta_m$ is real (and cavity waves for which $\nu_n$ is real in the mixed case). 

\subsection{Conditioning}
\label{sec.cond}

The conditioning of matrix $\calM$ is an important issue, except in the monomode case ($N=1$) where $\calM$ is a scalar.
We have foreseen in Section \ref{sec.profile} that taking $\Im(\nu_n)\leq0$ certainly leads to better conditioned systems. 
This is indeed the case as shown on Figure \ref{fig.condition} where we show the conditioning of matrix $\calM$ for the (M) case (the (R) case would be similar but slightly less critical) for both $\Im(\nu_n)\geq0$ and $\Im(\nu_n)\leq0$ with $M=5$ and $N=1$, 2 and 3.

\begin{figure}[htbp]
\begin{center}
\begin{minipage}{.45\linewidth}
\includegraphics[width=\textwidth]{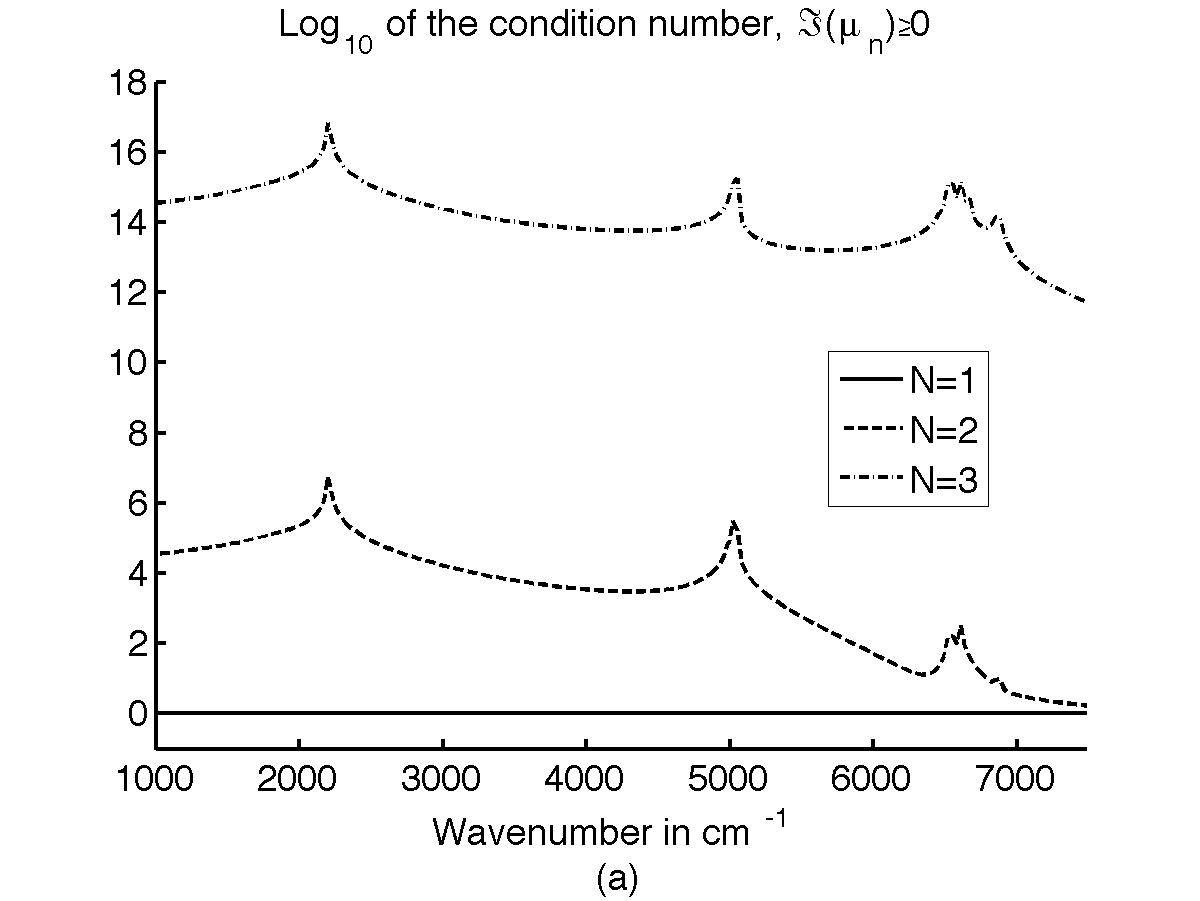}
\end{minipage}
\hfill
\begin{minipage}{.45\linewidth}
\includegraphics[width=\textwidth]{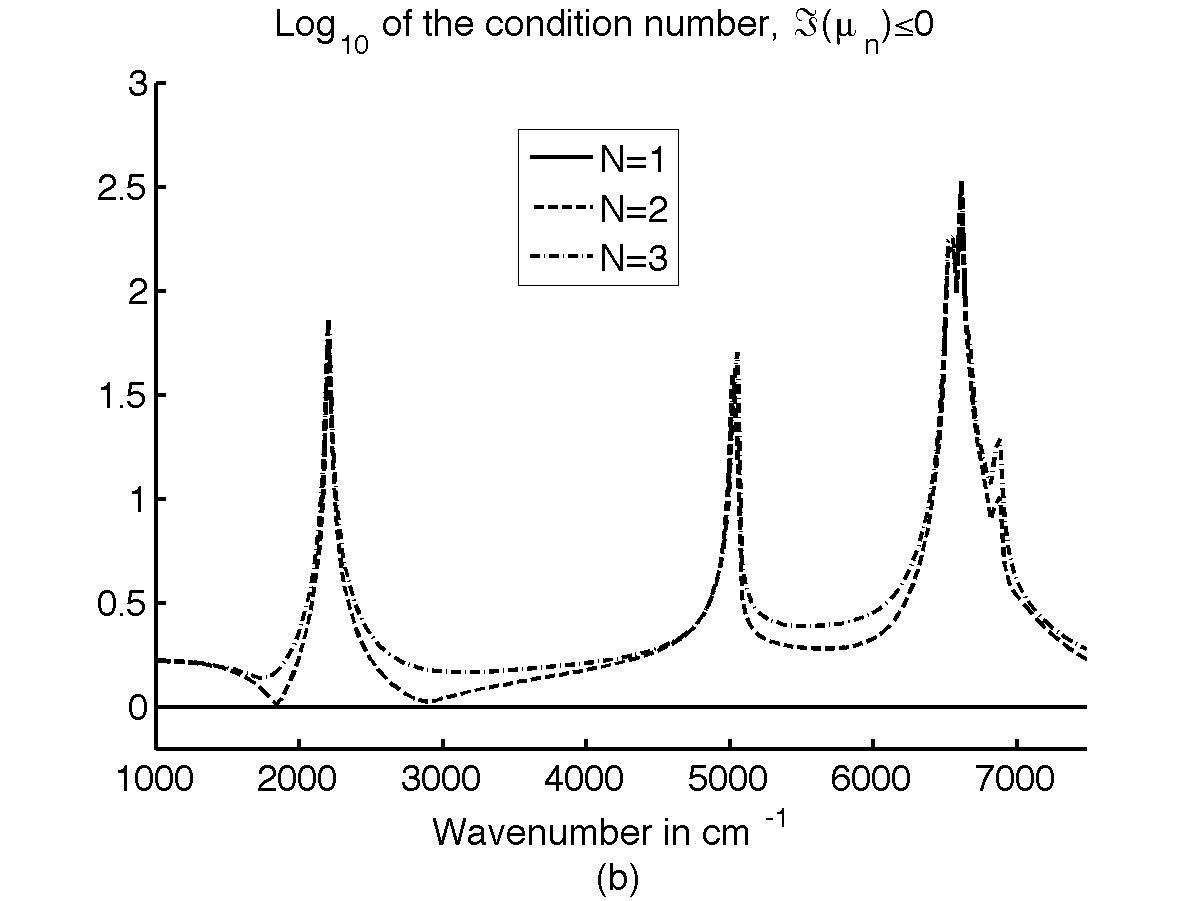}
\end{minipage}
\caption{\label{fig.condition}Impact of the sign of $\Im(\nu_n)$ and the number of cavity modes kept in the computations on the condition number in the (M) case: (a) $\Im(\nu_n)\geq0$; (b) $\Im(\nu_n)\leq0$.}
\end{center}
\end{figure}

We see that the conditioning of $\calM$ is not sensitive to $N$ if $\Im(\nu_n)\leq0$.
We do not show the plots here, but the conditioning is also not sensitive to $M$. 
A plot with $M=41$ would be very similar. 

We stress the fact that the solutions are not affected by the very poor conditioning because the entries of $\calS$ also show magnitude differences if $\Im(\nu_n)\geq0$.
It seems however better to use $\Im(\nu_n)\leq0$ and a well-conditioned system.

Another way to have a reasonable conditioning is to take advantage of the fact that reflected modes and cavity modes play a symmetric role. 
If we replace the system in the variable $A$ ($\calM A = \calS$) by a system $\tilde\calM R = \tilde\calS$, the conditioning is very similar to that displayed on Figure \ref{fig.condition}(b).

Let $U=\{m\in\bbZ,\ \beta_m\in\bbR\}$.
With our physical values $U=\{0\}$ up to $k_0=5054\cminv$, then $U=\{-1,0\}$ up to $k_0=6572\cminv$ and $U=\{-1,0,1\}$ above.
Peaks on Figure \ref{fig.condition} seem to occur at these values.

\subsection{Validation by reflectance experiments}

To validate our approach, we try to reproduce experiments with the physical parameters given in Section \ref{sec.param}. 
It is possible to measure the propagative reflected waves and therefore we can define the specular reflectance $|R_0|^2$ and the total reflectance $\sum_U |R_m|^2\beta_m/\beta_0$. 

\subsubsection{Monomode experiments}
\label{sec.monomode}

For narrow cavities ($w<\pi/k_0$) it is admitted that only a single mode can be kept in the cavity. 
We then have
\begin{equation}
\label{eq.A0}
A_0 \left(T_0(1+r_0e^{-2i\nu_0k_0h})-\Gamma(1+\xi)(1-e^{-2i\nu_0k_0h}) \sum_{m\in\bbZ} \frac{s_m^2}{\beta_m+\xi} \right) = \frac{2\beta_0s_0}{\beta_0+\xi},
\end{equation}
where 
\begin{eqnarray*}
s_m & \equiv & S_{m0} 
= \frac12 \sinc((\mu_0+\gamma_m) k_0w/2) + \frac12 \sinc((\mu_0-\gamma_m) k_0w/2), \\
T_0 & = & \frac12 (\sinc(\mu_0k_0w) + 1).
\end{eqnarray*}
Equation \eqref{eq.A0} is simpler in the mixed case where
$\mu_0=0$ and $\nu_0=1$, and therefore $s_m=\sinc(\gamma_mk_0w/2)$ and $T_0=1$.

Figure \ref{fig.monomode} reproduces figure 3 in \cite{Barbara-Quemerais-Bustarret-LopezRios-Fournier03}. 
In this reference the (M) monomode case is used and compared to experimental results. 
On figure \ref{fig.monomode}(a) we add the curves concerning the (M$_0$) and (P) cases.
On figure \ref{fig.monomode}(b) we compare with our (R$_0$) and (R) computations.

\begin{figure}[htbp]
\begin{center}
\begin{minipage}{.45\linewidth}
\includegraphics[width=\textwidth]{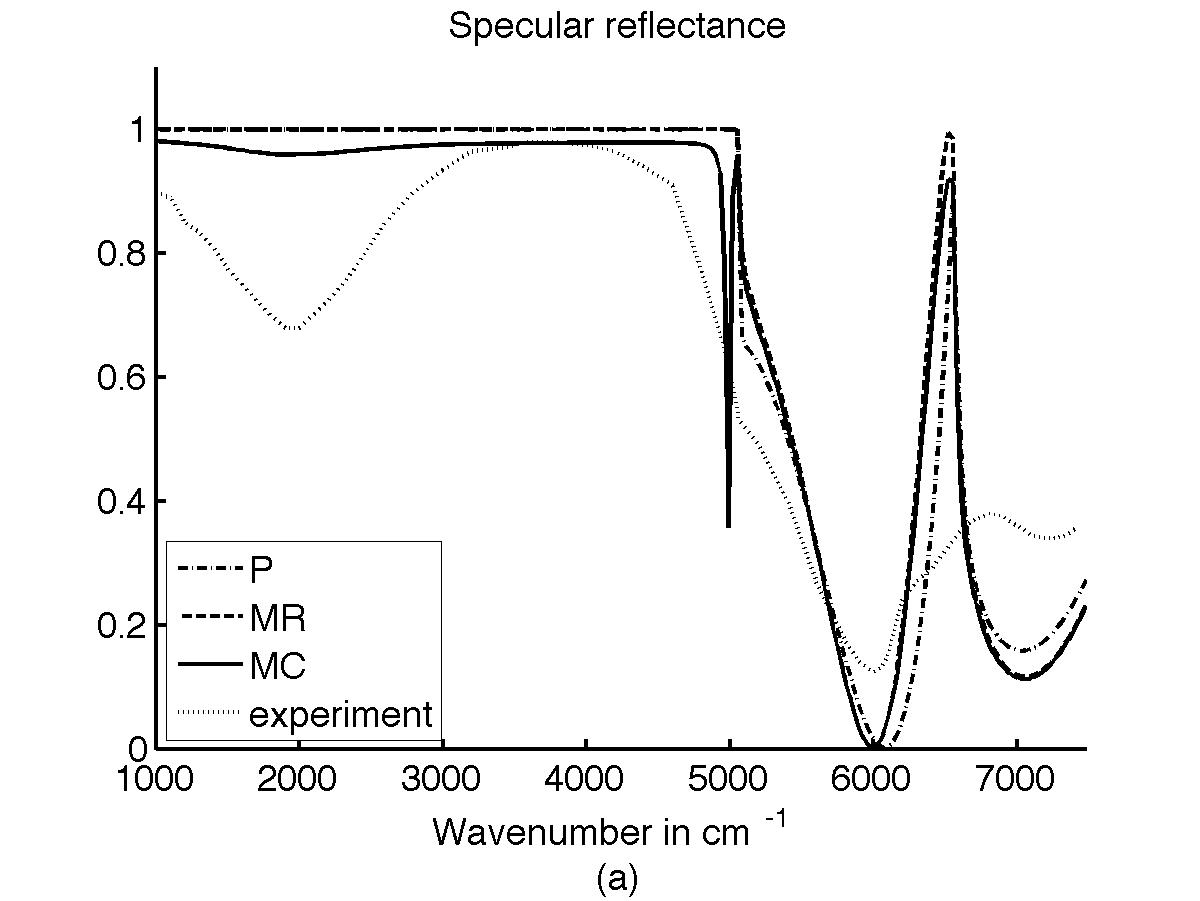}
\end{minipage}
\hfill
\begin{minipage}{.45\linewidth}
\includegraphics[width=\textwidth]{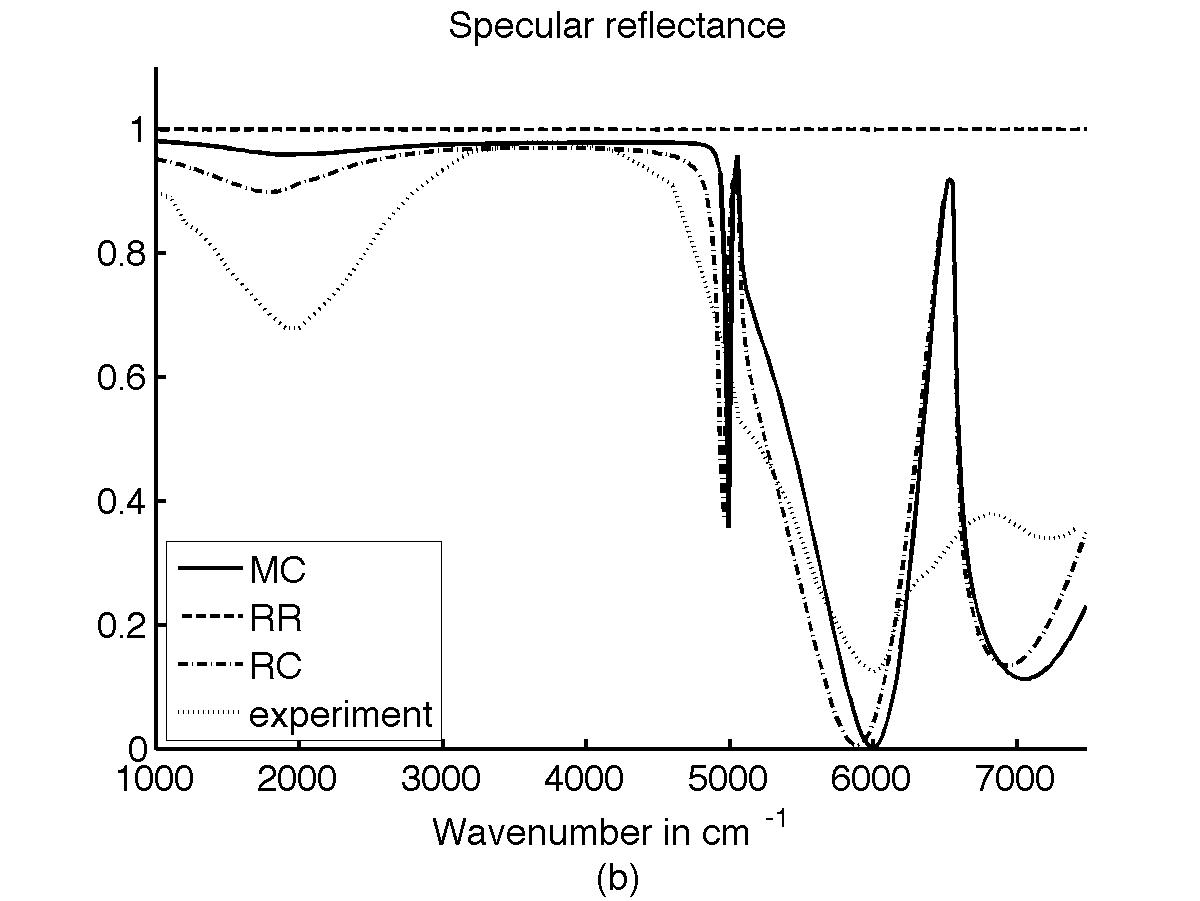}
\end{minipage}
\caption{\label{fig.monomode}(a) Specular reflectance for the (P), (M$_0$) and (M) cases, comparison with experimental results \cite{Barbara-Quemerais-Bustarret-LopezRios-Fournier03}; (b) Specular reflectance for the real (R$_0$) and (R) cases compared to the (M) case and experimental results.}
\end{center}
\end{figure}

We see on these plots that the perfect and mixed cases behave relatively similarly, at least for large wavenumbers. 
The (P) and (M$_0$) cases totally miss the first dip close to $2000\cminv$.
As predicted in \cite{Barbara-Quemerais-Bustarret-LopezRios-Fournier03}, considering real metallic surfaces on the walls ((R) case) allows to better catch this feature.
The (R) computations also give a better location of the decreasing part around $5000\cminv$.
The lack of fundamental mode totally disqualifies the (M$_0$) case for which the incident wave is totally reflected in the specular direction, whatever the wavenumber. 

For the next tests we will now only compare the (M) and (R) cases, i.e. only compare the impact of the choice of the boundary condition for a real metal (with a complex permittivity).

\subsubsection{Influence of the number of reflected waves}

The computations displayed on Figure \ref{fig.monomode} have been performed with one cavity mode ($n=0$) and 41 reflected waves $m=-20,\dots,20$. 
We may ask about the incidence of this choice.

First let us investigate the case of reflected waves. 
We remain in the monomo\-de case but make different choices for the number of reflected waves $M$.
Figure \ref{fig.M} shows the specular reflectance in the (M) and (R) cases with $M=5$ i.e. $m=-2,\dots,2$ (which yield very close results to the $M=41$ case used on Figure \ref{fig.monomode}), $M=1$ i.e. $m=0$ (only the specular reflected wave) and $m\in U$.

\begin{figure}[htbp]
\begin{center}
\begin{minipage}{.45\linewidth}
\includegraphics[width=\textwidth]{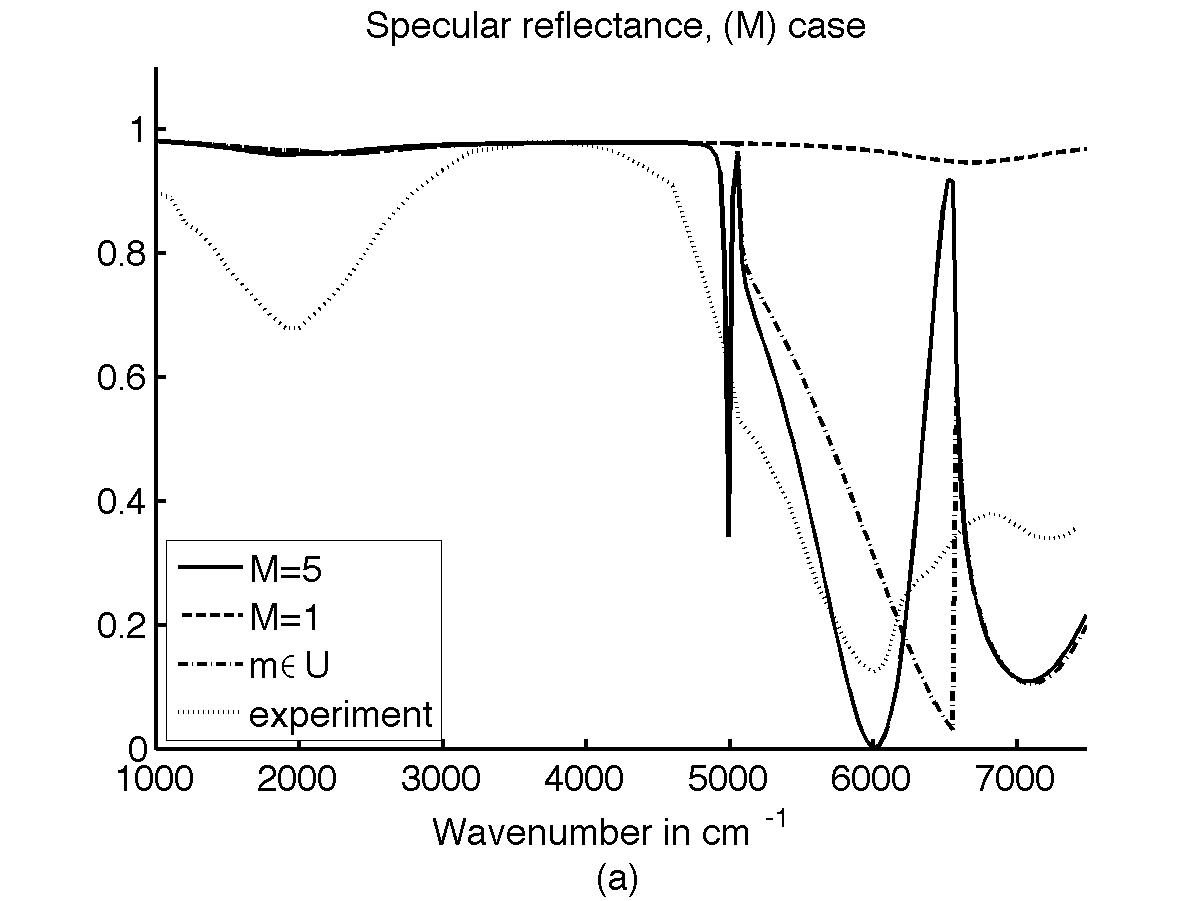}
\end{minipage}
\hfill
\begin{minipage}{.45\linewidth}
\includegraphics[width=\textwidth]{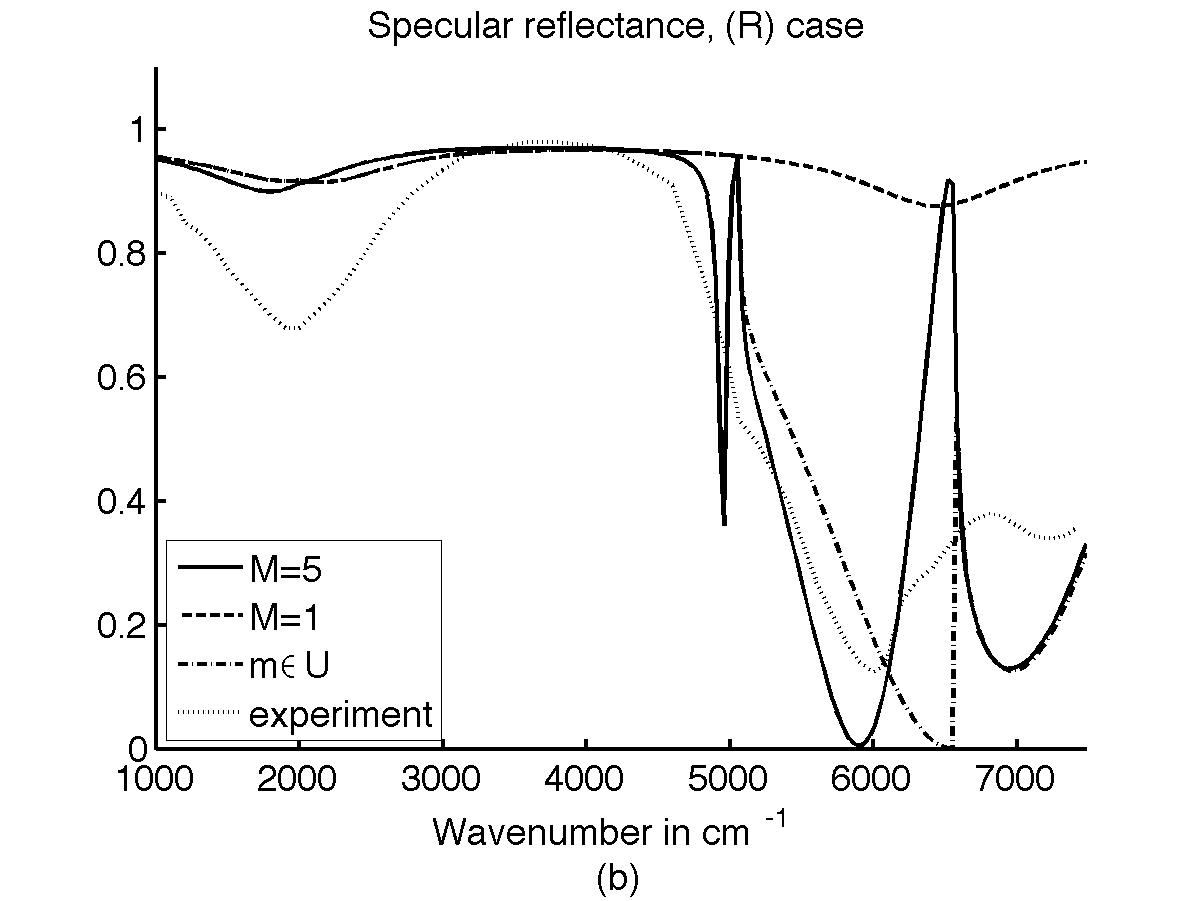}
\end{minipage}
\caption{\label{fig.M}Impact of the number of reflected waves kept in the computations on the specular reflectance in the (M) case (a), and the (R) case (b). For each case, we used 5 reflected waves ($m=-2,\dots,2$), the specular reflected wave ($m=0$) or all the propagative reflected waves ($m\in U$).}
\end{center}
\end{figure}

The comparison of the (M) and (R) cases leads to the same conclusion as in the mono\-mode test case.
They both behave in the same way regarding the impact of the choice of the number of reflected wave. 
Keeping only the exact number of propagative waves does not seem enough to describe completely the phenomena, let alone keeping only the specular wave. 
This huge impact can be clearly seen by considering the discontinuity at $k_0=6572\cminv$ when using $m\in U$, i.e. when changing the number of reflected waves at this wavenumber.

\subsubsection{Influence of the number of cavity modes}

Now we investigate the impact of the number of cavity modes.
Figure \ref{fig.N} shows the specular reflectance in the (M) and (R) cases with $N=1$, 2 and 3 modes.
Recall that the monomode assumption is \textit{a priori} valid up to $k_0=6666\cminv$.
Considering the previous test case we have taken $M=5$ here.

\begin{figure}[htbp]
\begin{center}
\begin{minipage}{.45\linewidth}
\includegraphics[width=\textwidth]{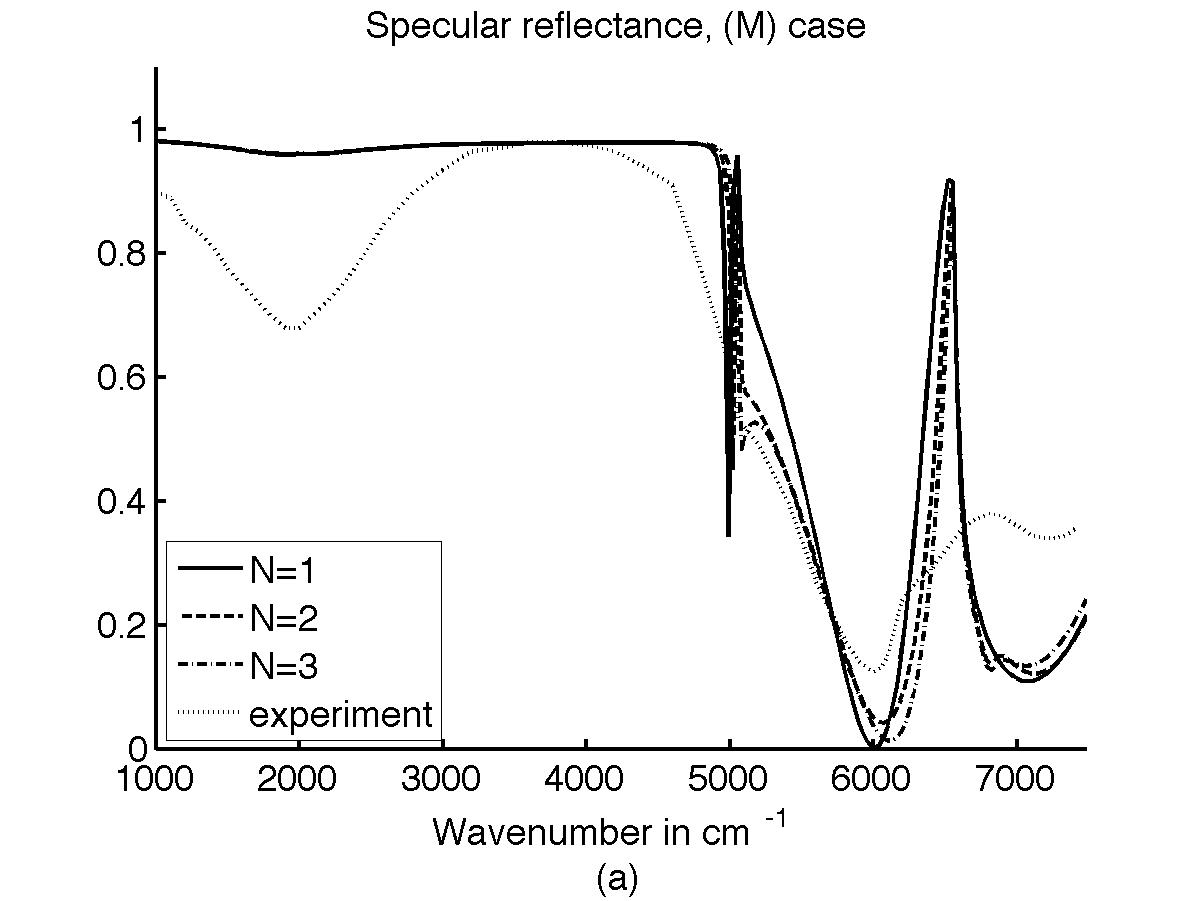}
\end{minipage}
\hfill
\begin{minipage}{.45\linewidth}
\includegraphics[width=\textwidth]{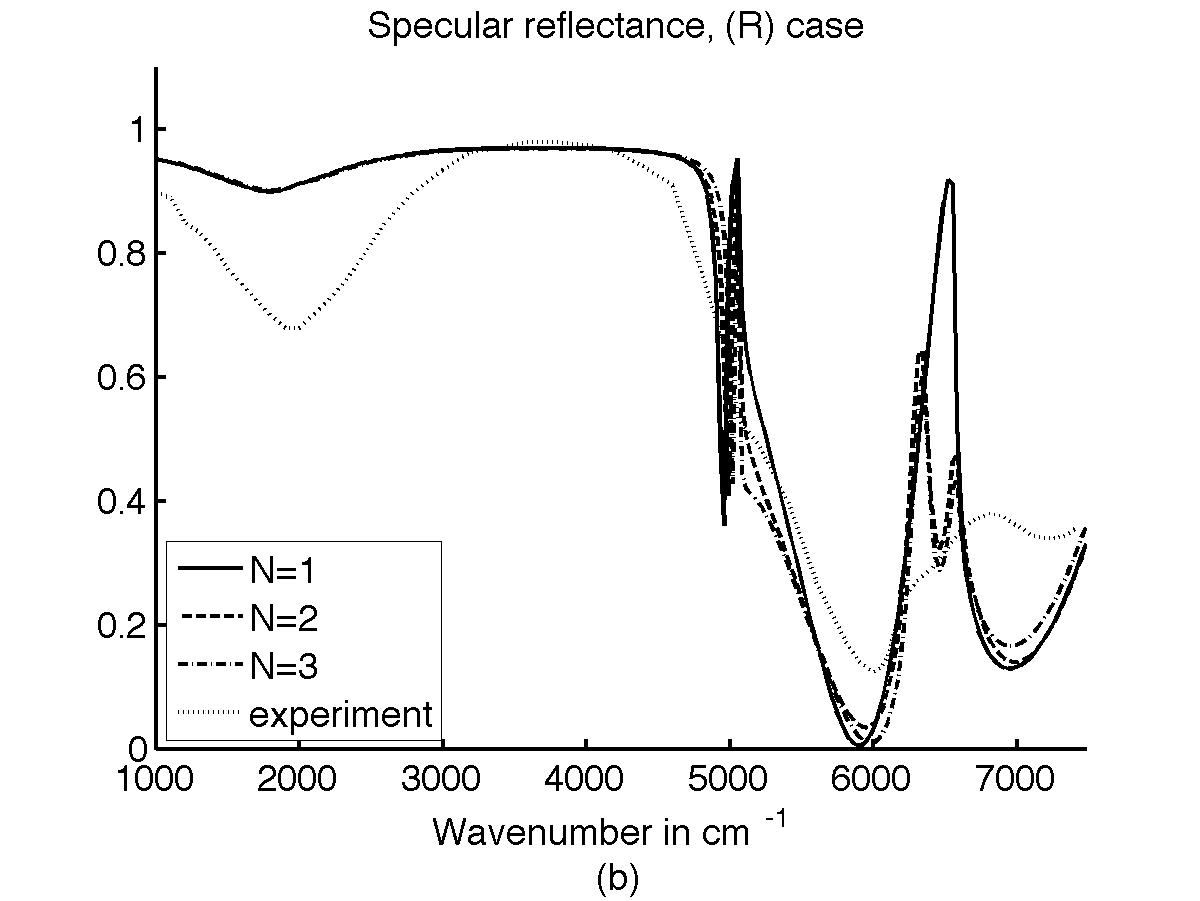}
\end{minipage}
\caption{\label{fig.N}Impact of the number of cavity modes kept in the computations on the specular reflectance in the (M) case (a), and the (R) case (b). For each case, we used 1 mode ($n=0)$, 2 modes ($n=0,1$) and 3 modes ($n=0,1,2$).}
\end{center}
\end{figure}

In the (M) case, apart from already mentioned features, we notice that the approximation is much better by taking $N=2$ or 3 in the $5000$--$6000\cminv$ range.
The rest of the results are not so much affected by the number of cavity modes kept in the computation.

In the (R) case, we can note a dissociation between the dip below $5000\cminv$ and the decrease between $5000$ and $6000\cminv$.
When the monomode assumption is not valid anymore, the (R) case outperforms the (M) case if relevant cavity modes are kept.

\subsection{Amplitude of cavity modes}

Contrarily to reflected modes, it is impossible to measure the cavity modes without changing them.
We therefore lack experimental data to compare the numerical results with.
We can only hope that computations which yield fair results regarding reflected waves will also yield a correct estimation of the amplitude of cavity modes.
We only investigate modes $n=0$, 1 and 2.
Comparing the values of $A_n$ for the different methods would be pointless because it is multiplied by functions which can differ greatly in magnitude.
To be able to show the dependance with respect to the wavenumber we plot the renormalized cavity mode amplitude $a_n = A_n\sqrt{\calE_n}$.
 
Figure \ref{fig.A_0} shows $a_0$ in the (M) and the (R) cases for different values of $N$.

\begin{figure}[htbp]
\begin{center}
\begin{minipage}{.45\linewidth}
\includegraphics[width=\textwidth]{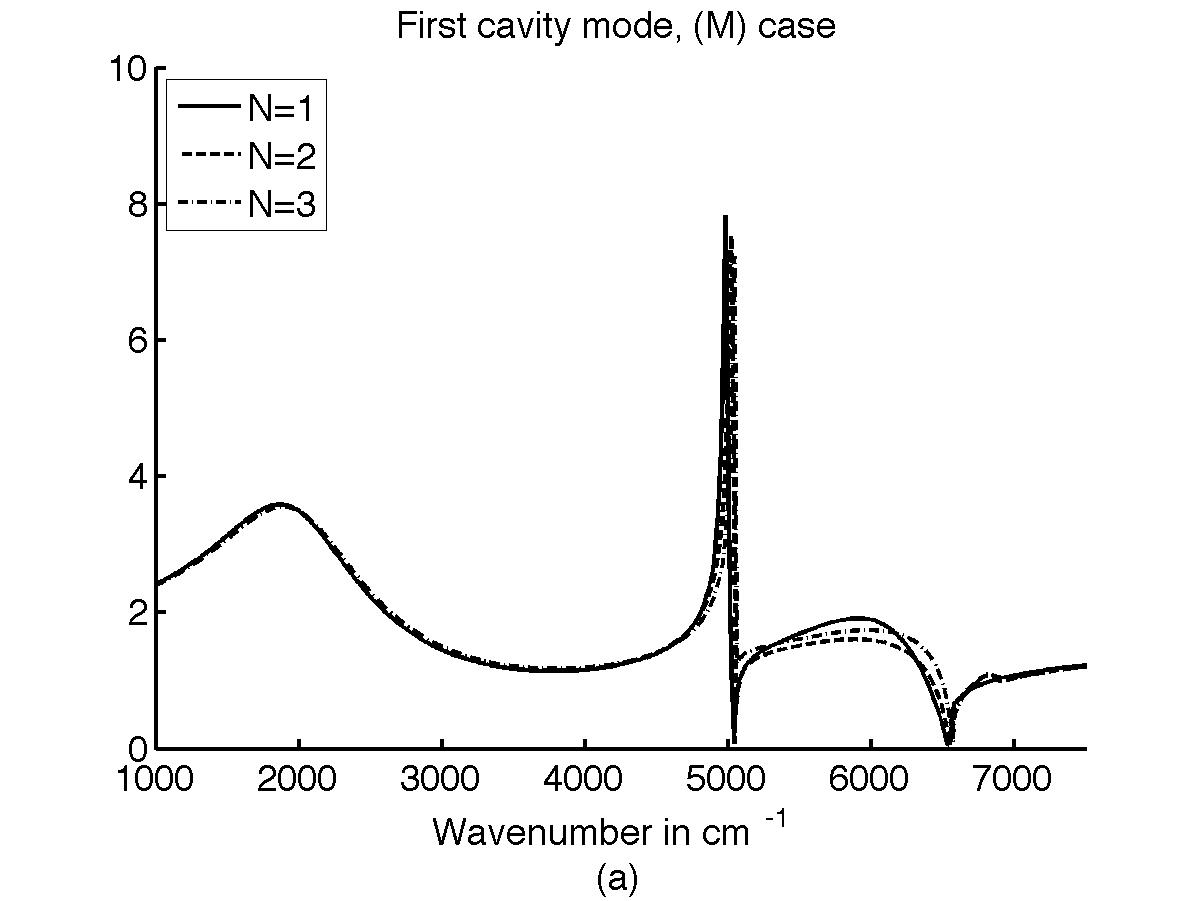}
\end{minipage}
\hfill
\begin{minipage}{.45\linewidth}
\includegraphics[width=\textwidth]{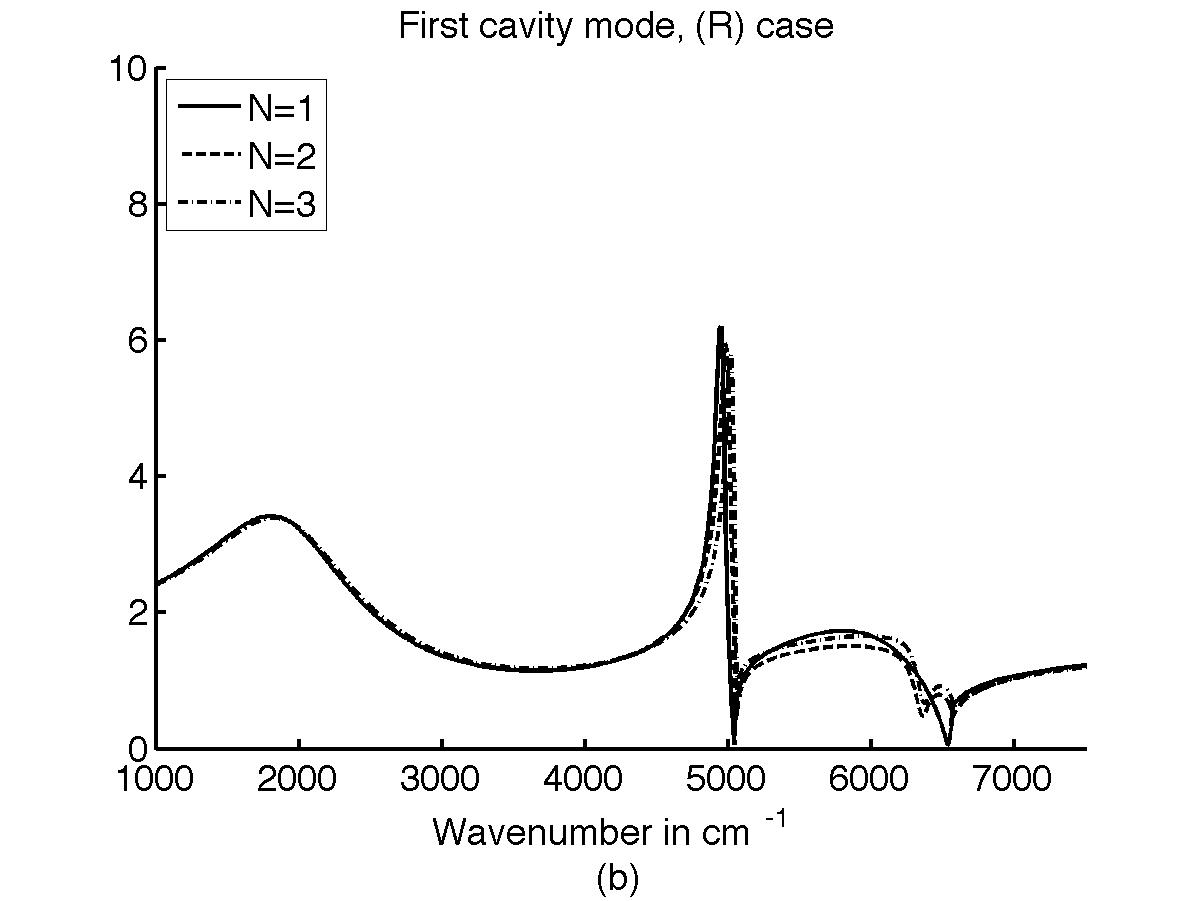}
\end{minipage}
\caption{\label{fig.A_0}Amplitude of the first renormalized cavity mode $A_0$ in the (M) case (a), and the (R) case (b) when computing $N=1$, 2 or 3 cavity modes.}
\end{center}
\end{figure}

The overall aspect is roughly always the same. 
When computing 1 mode, the (M) and (R) cases yield dips and bumps at about the same wavenumbers.
The impact of taking 2 or 3 cavity modes is low. 
The peak around $5000\cminv$ is very slightly shifted to the right.
A difference between the (M) and (R) cases is that the dip around $6500 \cminv$ disappears in the (R) case.
This certainly is connected with the same difference in the curves at those frequencies pointed out on Figure \ref{fig.N}. 
Since the (R) case was then closer to the experimental reflectance results, we can expect that the cavity modes displayed on figure \ref{fig.A_0}(b) for 2 and 3 cavity modes are closer to reality.

Now let us investigate the next two modes. 
We plot on Figure \ref{fig.A_n} only the results for $N=3$ preferring to compare the (M) and the (R) cases. 
 
\begin{figure}[htbp]
\begin{center}
\begin{minipage}{.45\linewidth}
\includegraphics[width=\textwidth]{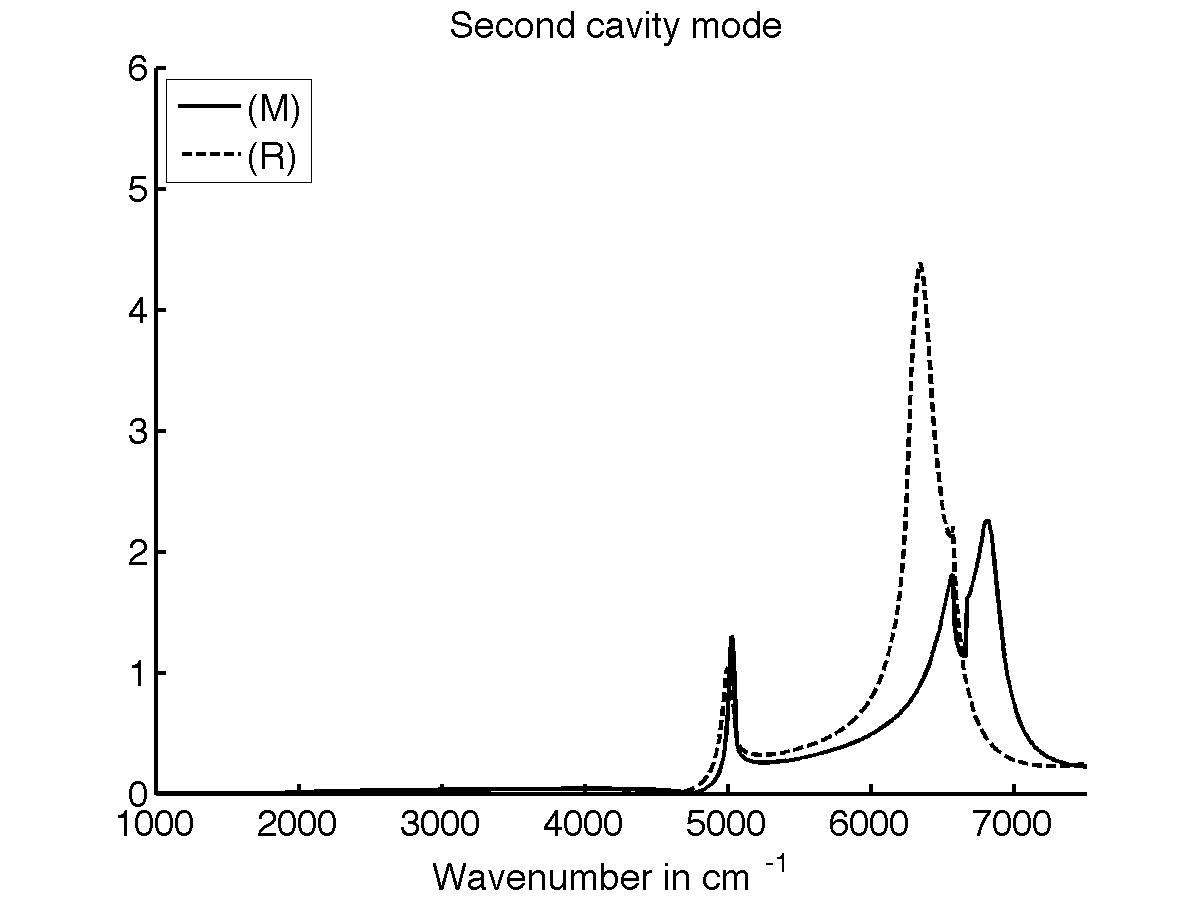}\\
\end{minipage}
\begin{minipage}{.45\linewidth}
\includegraphics[width=\textwidth]{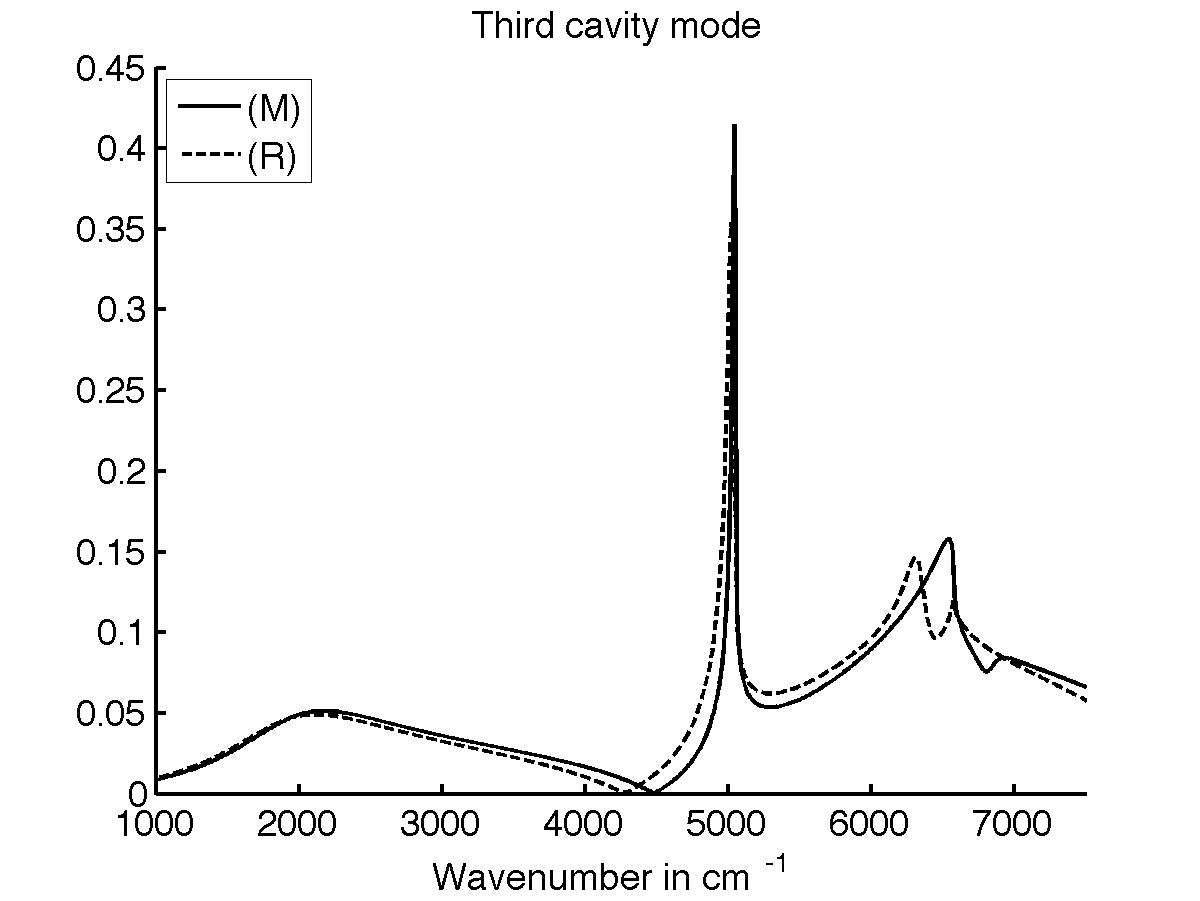}\\
\end{minipage}
\caption{\label{fig.A_n}Amplitude of the (a) second $a_1$ and (b) third $a_2$ renormalized cavity modes in the (M) and (R) cases.}
\end{center}
\end{figure}

As could be predicted from Figure \ref{fig.energy_pos}, the phenomena occur for smaller wavenumbers in the (R) case than in the (M) case.
They appear even sooner as Figure \ref{fig.energy_pos} would induce, which means that we should take into account the first evanescent modes even in the range where the monomode assumption is supposed to be valid.
We see that the second cavity mode can be predominant over the fundamental mode for some wavenumber.
This means that we can "switch on" almost pure second cavity modes for precise experimental settings. 
 
\section{Conclusion}

We have investigated the role of the choice of boundary conditions on the description of reflected and cavity modes in sub-wavelength gratings. 
As one could expect we have shown that a better agreement with experiments can be achieved by considering real-metal conditions on all boundaries.
Using asymptotic expansions, we have found a mean to produce numerical simulations in this full metal case which are not much more intricate that the mixed case previously used in the literature.  
On the way we have studied the role of the numerical parameters used to solve the theoretical infinite dimensional systems, the need to keep a certain number of evanescent modes, and a way to perform computations without being perturbed by conditioning issues.
 
\section*{Acknowledgments}
This project has been supported by the grant \textsc{madison} from the Universit\'e Joseph Fourier.
The authors wish to thank Aude Barbara and Pascal Qu\'emerais for fruitful discussions.


\end{document}